\documentclass[11pt]{article}

\usepackage[T1]{fontenc}
\usepackage[utf8]{inputenc}
\usepackage{amsmath,amssymb,amsthm}
\usepackage{geometry}
\usepackage{microtype}
\usepackage[hidelinks]{hyperref}

\newtheorem{theorem}{Theorem}[section]
\newtheorem{lemma}[theorem]{Lemma}
\newtheorem{proposition}[theorem]{Proposition}
\newtheorem{corollary}[theorem]{Corollary}
\newtheorem{conjecture}[theorem]{Conjecture}
\theoremstyle{definition}
\newtheorem{definition}[theorem]{Definition}
\newtheorem*{maintheorem}{Main Theorem}
\numberwithin{equation}{section}

\title{Prime and Nonprime Totatives: A Sharp Construction and Exact Thresholds}
\author{
Amirali Fatehizadeh\\
\small Faculty of Mathematical Sciences, Shahid Beheshti University, Tehran, Iran\\
\small \texttt{A.fatehizadeh@mail.sbu.ac.ir}\\
\small \texttt{Amirali.fatehizadeh@gmail.com}
}
\date{}

\begin{document}

\maketitle

\begin{abstract}
For \(n\ge2\), let \(A(n)=\pi(n)-\omega(n)\) and \(B(n)=\phi(n)-\pi(n)+\omega(n)\) denote the numbers of prime and nonprime totatives of \(n\), respectively, with \(1\) included in \(B(n)\). We construct an explicit injectively parametrized family \(\mathcal C_n\) of composite totatives by completing suitably restricted squarefree products of prime totatives with a larger prime totative that is the unique largest prime factor, making the parametrization recoverable. We prove \(\lvert\mathcal C_n\rvert/A(n)\ge (e^{-\gamma}-o(1))\log A(n)/\log\log A(n)\), realizing the sharp classical leading constant \(e^{-\gamma}\). For the same family, with the same defining parameters, we obtain an effective refinement with error of order \((\log\log A(n))^{-1/4}\) and absolute effectively computable constants. Classical minimal-order results and the prime number theorem show that \(e^{-\gamma}\) is optimal: no family contained in the nonprime totatives can satisfy a uniform lower bound at this scale with a larger leading constant. We also determine exact eventual thresholds. Let \(N_k\) and \(M_k\) be the least integers such that \(\phi(n)>k\pi(n)\) for every \(n\ge N_k\) and \(B(n)>kA(n)\) for every \(n\ge M_k\), respectively. We prove \(M_k\le N_{k+1}\) for every \(k\ge1\), determine \(N_1,\ldots,N_6\) and \(M_1,\ldots,M_5\) exactly, and obtain \(M_k=N_{k+1}\) for \(1\le k\le5\).
\end{abstract}

\medskip
\noindent\textbf{Keywords:}
Euler's totient function; prime-counting function; prime and nonprime totatives.

\medskip
\noindent\textbf{2020 Mathematics Subject Classification:}
11N37 (Primary); 11A25 (Secondary).

\section{Introduction}\label{sec:introduction}

For \(n\ge2\), let \(\omega(n)\) denote the number of distinct prime divisors of \(n\), and define
\[
\begin{aligned}
A(n)&=\#\{p\le n:p\text{ is prime and }p\nmid n\}=\pi(n)-\omega(n),\\
B(n)&=\#\{m\le n:(m,n)=1,\ m\text{ is not prime}\}=\phi(n)-\pi(n)+\omega(n).
\end{aligned}
\]
Thus \(A(n)\) counts the prime totatives of \(n\), while \(B(n)\) counts the nonprime totatives, including \(1\). In particular, \(\phi(n)=A(n)+B(n)\) and \(\pi(n)=A(n)+\omega(n)\). These decompositions turn the comparison between Euler's totient function and the prime-counting function into a more concrete counting problem: how many nonprime totatives can be produced from the prime totatives themselves?

Direct multiplicative arguments for this problem have a substantial history. Moser's study of the equation \(\phi(n)=\pi(n)\) already separates, in different notation, the distinct prime divisors, prime totatives, and nonprime totatives, and uses products of prime totatives in establishing the eventual inequality \(\phi(n)>\pi(n)\) \cite[pp.~178--180]{Moser1951}. Birch and Singmaster subsequently developed related product constructions, with unique factorization providing the required distinctness \cite{BirchSingmaster1984}. Sanna gave a later elementary proof of \(\phi(n)>\pi(n)\) for \(n\ge91\), based on Bonse's inequality, and his argument again contains a direct multiplicative mechanism for producing composite totatives \cite{Sanna2012}. Abhijit and Reddy explicitly introduced the function \(\widetilde\phi(n)\) counting nonprime units, which is exactly \(B(n)\) in the present notation, and studied it by further direct product constructions \cite{AbhijitReddy2019}. These results provide the natural arithmetic background for asking not merely whether nonprime totatives eventually dominate prime ones, but how efficiently such domination can be realized by an explicit construction.

The extremal scale is already determined by classical analytic number theory. Landau's minimal-order theorem and the prime number theorem imply
\[
\liminf_{n\to\infty}
\frac{\phi(n)}{\pi(n)}
\frac{\log\log n}{\log n}
=
e^{-\gamma},
\]
where \(\gamma\) is Euler's constant
\cite[Theorem~3.4.2]{LagariasEulerConstant}
\cite[(27.2.3)]{DLMF}.
Moreover, \(A(n)\sim\pi(n)\sim n/\log n\) and \(B(n)\sim\phi(n)\), so that
\[
\liminf_{n\to\infty}
\frac{B(n)}{A(n)}
\frac{\log\log n}{\log n}
=
e^{-\gamma}.
\]
Thus both the scale \(\log n/\log\log n\) and the leading constant \(e^{-\gamma}\) are classical. The question addressed here is different: can this sharp extremal order be realized directly inside the totatives by a single explicit, collision-free family constructed from the prime totatives themselves?

Our construction answers this question affirmatively. Write \(q_1<q_2<\cdots<q_{A(n)}\) for the prime totatives of \(n\). We form squarefree multipliers \(s\) from suitably large \(q_i\)'s and complete each retained multiplier by a larger prime totative \(q_j\). The completion is chosen so that \(q_j\) is the unique largest prime factor of \(sq_j\). Consequently the representation can be recovered from the resulting integer, and the parametrization is injective. The size restriction on the multipliers simultaneously leaves sufficiently many completion primes and keeps every constructed product below \(n\). The main result is the following.

\begin{maintheorem}
For all sufficiently large \(n\), the explicitly defined family \(\mathcal C_n\) is an injectively parametrized family of composite totatives of \(n\) satisfying
\[
\frac{\lvert\mathcal C_n\rvert}{A(n)}
\ge
\left(e^{-\gamma}-o(1)\right)
\frac{\log A(n)}{\log\log A(n)}.
\]
For the same family, with the same defining parameters, there exist absolute effectively computable constants \(C_0,A_0>0\) such that, whenever \(A(n)\ge A_0\),
\[
\frac{\lvert\mathcal C_n\rvert}{A(n)}
\ge
\left(e^{-\gamma}-C_0(\log\log A(n))^{-1/4}\right)
\frac{\log A(n)}{\log\log A(n)}.
\]
Moreover, \(e^{-\gamma}\) is the largest possible leading constant in the following uniform comparison: if \(\mathcal D_n\) is any family of integers counted by \(B(n)\) such that, for some real \(c\),
\[
\frac{\lvert\mathcal D_n\rvert}{A(n)}
\ge
(c-o(1))
\frac{\log A(n)}{\log\log A(n)}
\]
for all sufficiently large \(n\), then \(c\le e^{-\gamma}\).
\end{maintheorem}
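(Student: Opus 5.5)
We prove the three assertions in turn: first the construction and its injectivity, then the counting lower bound (the asymptotic and the effective versions together), and finally the optimality claim.

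\textbf{Construction and injectivity.} Let \(x=n\). Fix parameters \(y=y(n)\) and a cap \(T=T(n)\) on multipliers. Take \(S\) to be the set of squarefree \(s>1\) composed of prime totatives \(q_i\) in a window \(q_i\ge y\), with \(s\le T\). Complete each \(s\in S\) by every prime totative \(q_j\) with \(P^+(s)<q_j\le n/s\), where \(P^+(s)\) is the largest prime factor of \(s\). Then \(sq_j\le n\), and \(sq_j\) is coprime to \(n\) because each prime factor is a totative. It is composite since \(s>1\). Also \(q_j\) is the unique largest prime factor of \(sq_j\), so \((s,q_j)\) is recovered from the product by stripping off \(P^+\). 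This gives injectivity. A natural choice is \(T=n^{\varepsilon}\) with \(\varepsilon\to0\) slowly, and \(y\) roughly \(\log n\) or larger. The products \(s\) must involve enough primes to reach the \(\log n/\log\log n\) scale while staying below \(n^{\varepsilon}\).

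\textbf{Counting.} We have
\[
|\mathcal C_n|\ge\sum_{s\in S}\bigl(\pi(n/s)-\pi(P^+(s))-\omega(n)\bigr),
\]
and \(\pi(n/s)\approx n/(s\log n)\) since \(s\le n^{\varepsilon}\). This gives roughly
\[
|\mathcal C_n|/A(n)\gtrsim\sum_{s\in S}1/s .
\]
For squarefree \(s\) built from allowed primes, this sum is approximately
\[
\prod_{y\le p\le z,\ p\nmid n}(1+1/p)
\]
after a Rankin-type truncation to \(s\le T\). The worst case is when \(n\) is divisible by the small primes up to about \(\log n\), as in primorial-like \(n\). Mertens' theorem for \(\prod_{p\mid n}(1-1/p)\ge (e^{-\gamma}+o(1))/\log\log n\), combined with \(\prod_{p\le z}(1+1/p)\), yields the factor \(e^{-\gamma}\log n/\log\log n\) once \(z\) is a suitable power of \(n\) and losses from the excluded primes \(p\mid n\) are controlled. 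Concretely, the plan uses
\[
\sum_{s}1/s\ge\frac{\prod(1+1/p)}{\text{(tail)}}
\]
and compares with \(\prod_{p\le z}(1-1/p)^{-1}\prod_{p\mid n}(1-1/p)\). The ratio of \((1+1/p)\) to \((1-1/p)^{-1}\) converges, so primes below \(y\) must be exploited, or \(y\to\infty\) chosen so the lost factor is \(1+o(1)\).

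\textbf{Main obstacle.} The main obstacle is precisely this quantitative truncation. We must show that restricting to \(s\le n^{\varepsilon}\) and \(P^+(s)\) small loses only a factor \(1-o(1)\) in \(\sum 1/s\), uniformly over the possible sets of primes dividing \(n\). The first attempt would be Rankin's trick: bound the tail \(\sum_{s>T}1/s\le T^{-\delta}\prod(1+p^{\delta-1})\) with \(\delta\asymp1/\log z\). Balancing \(\varepsilon\), \(y\) and \(\delta\) against the subtraction \(\pi(P^+(s))\) is expected to produce the \((\log\log A)^{-1/4}\) error. Effective Mertens and PNT bounds (Rosser–Schoenfeld type) make every constant explicit. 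We also replace \(\log n/\log\log n\) by \(\log A/\log\log A\) using \(\log A(n)=\log n+O(\log\log n)\).

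\textbf{Optimality.} Any such \(\mathcal D_n\) satisfies \(|\mathcal D_n|\le B(n)\). Along primorials \(n\), Landau's theorem gives
\[
B(n)\sim\phi(n)=(e^{-\gamma}+o(1))\,n/\log\log n,
\]
while \(A(n)\sim n/\log n\). Hence
\[
|\mathcal D_n|/A(n)\le(e^{-\gamma}+o(1))\log A/\log\log A
\]
along this sequence, forcing \(c\le e^{-\gamma}\).
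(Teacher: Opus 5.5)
Your optimality argument is correct and is essentially the paper's: Landau's minimal order along primorials, together with $A(n)\sim n/\log n$. Your largest-prime-factor device for injectivity is also the one the paper uses. The counting part, however, fails already at the level of the order of magnitude, not just the constant.

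With the cap $s\le T=n^{\varepsilon}$, every prime factor of $s$ is at most $n^{\varepsilon}$. Take $n=p_k^\#$, so every prime up to $p_k\sim\log n$ divides $n$. Whatever $y$ is,
\[
\sum_{s\in S}\frac1s\le\prod_{p_k<p\le n^{\varepsilon}}\Bigl(1+\frac1p\Bigr)=(1+o(1))\,\frac{\varepsilon\log n}{\log\log n}.
\]
Since $\pi(n/s)\le(1+o(1))\,n/(s\log n^{1-\varepsilon})$, your family satisfies $|\mathcal C_n|/A(n)\ll\varepsilon\log A/\log\log A=o(\log A/\log\log A)$ when $\varepsilon\to0$. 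In effect you only count totatives $m$ with $m/P^+(m)\le n^{\varepsilon}$, an $O(\varepsilon)$-proportion of the relevant ones.

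So the step you flag as the main obstacle, that truncating to $s\le n^{\varepsilon}$ costs only a factor $1-o(1)$, is false. Rankin's trick cannot rescue it: $T^{-\delta}$ with $\delta\asymp1/\log z$ is small only when $z\le n^{\varepsilon/K}$ with $K$ large, which shrinks the product to about $(\varepsilon/K)\log n/\log y$.

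Nor does combining $\prod_{p\mid n}(1-1/p)$ with $\prod_{p\le z}(1+1/p)$ produce $e^{-\gamma}$ for your family. The first controls $\phi(n)/n$, i.e.\ all totatives, and says nothing about how many your family captures. In the ratio of $(1+1/p)$-products the Mertens constants simply cancel.

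The sharp constant requires the opposite regime, and your plan contains none of the three ingredients it needs.
\begin{itemize}
\item Multipliers must run up to $A^{1-\delta}$ with $\delta\to0$. This must be subject to a joint condition, such as the paper's $s\,i(s)\le A^{1-\delta_A}$, that keeps $P^+(s)$ well below $n/s$.
\item Each multiplier must be credited with $\pi(n/s)\approx n/(s\log(n/s))$, i.e.\ with the weight $(1-\log s/\log A)^{-1}$ relative to $A/s$. Replacing $\log(n/s)$ by $\log n$ would by itself lower the constant, under the same retention, to $\mathbb E(T+2)^{-1}<e^{-\gamma}$.
\item One needs the joint limit law $(\log S/\log A,\ \log i(S)/\log A)\Rightarrow(U(1+T),U)$ under the reciprocal measure. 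This yields $\mathbb E[\log(T+2)/(T+1)]=\mathbb E(1+T)^{-1}=e^{-\gamma}$.
\end{itemize}

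Note also that the statement concerns the paper's specific family $\mathcal C_n$, with its parameters $R$, $\delta_A$, $J_s$, and the effective bound is asserted for that same family. You work with a different family. You also offer no mechanism for the $(\log\log A)^{-1/4}$ rate. In the paper it comes from the Wasserstein estimate $d_1(W_k,T)\ll Y^{-3/2}+\sqrt{\ell_R/\ell_k}$ for the conditional multiplier law, combined with the bounded density of $T$ to control the moving retention boundary.
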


Since \(A(n)\sim n/\log n\), we have
\(\log A(n)/\log\log A(n)\sim\log n/\log\log n\).
Consequently, the first bound may equivalently be written as
\(\lvert\mathcal C_n\rvert/A(n)\ge(e^{-\gamma}-o(1))\log n/\log\log n\),
so the construction attains the classical \(n\)-scale with the same sharp leading constant \(e^{-\gamma}\).

The sharp constant arises from the reciprocal distribution of the squarefree multipliers. In the limiting reciprocal model, the relevant law is governed by the perpetuity \(T\), whose distribution is that of \(U(1+T')\), where \(U\) is uniform on \([0,1]\) and \(T'\) is an independent copy of \(T\); the resulting counting constant is \(\mathbb E(1+T)^{-1}=e^{-\gamma}\).

For the effective form, we retain the same construction and the same defining parameters, and use a uniform Wasserstein estimate for the conditional multiplier law, together with the bounded density of the perpetuity, to control the moving retention boundary. Since every member of \(\mathcal C_n\) is counted by \(B(n)\), the construction yields the corresponding sharp lower bounds for \(B(n)/A(n)\) and \(\phi(n)/\pi(n)\). Classical minimal-order theory and the prime number theorem show that the leading constant \(e^{-\gamma}\) is optimal, while the construction realizes it explicitly.

A second, independent question concerns exact eventual dominance. For \(k\ge1\), let \(N_k\) be the least integer such that \(\phi(n)>k\pi(n)\) for every \(n\ge N_k\), and let \(M_k\) be the least integer such that \(B(n)>kA(n)\) for every \(n\ge M_k\). We prove the structural inequality \(M_k\le N_{k+1}\) for every \(k\ge1\), determine \(N_1,\ldots,N_6\) and \(M_1,\ldots,M_5\) exactly, and, for the computed levels, obtain
\[
M_k=N_{k+1}\qquad(1\le k\le5).
\]

The proof combines primorial extremality, a radical--support reduction that turns the remaining bounded ranges into finite exhaustive searches, and explicit analytic estimates that close the tails. The identity
\[
B(n)-kA(n)
=
\phi(n)-(k+1)\pi(n)+(k+1)\omega(n)
\]
shows that the equality of the two threshold sequences is not formal. The five exact coincidences lead naturally to the conjecture \(M_k=N_{k+1}\) for \(k\ge1\).

The paper is organized as follows. Section~\ref{sec:construction} contains the main constructive argument. Subsection~\ref{subsec:sharp-construction} develops the collision-free prime-totative construction and proves the sharp asymptotic lower bound, while Subsection~\ref{subsec:effective} establishes the effective refinement and the classical optimality comparison. Section~\ref{sec:thresholds} turns to the full ratios, develops the primorial and support reductions, determines the exact thresholds above, and concludes with the structural identity and conjecture.

\section{Prime Totatives: Sharp Construction and Effective Refinement}\label{sec:construction}
\subsection{Sharp Construction and Main Result}\label{subsec:sharp-construction}

For \(n\ge2\), a totative of \(n\) is an integer \(m\) with \(1\le m\le n\) and \((m,n)=1\). A prime totative is a totative that is prime, and a nonprime totative is a totative that is not prime. Thus \(1\) is included in the count of nonprime totatives.

Define
\[
\begin{aligned}
A(n)&=\#\{p\le n:p\text{ is prime and }p\nmid n\},\\
B(n)&=\#\{m\le n:(m,n)=1,\ m\text{ is not prime}\}.
\end{aligned}
\]
The totatives split into their prime and nonprime members, while the primes at most \(n\) split into those dividing \(n\) and those not dividing \(n\). Hence \(\phi(n)=A(n)+B(n)\) and \(\pi(n)=A(n)+\omega(n)\). Whenever \(A(n)>0\),
\begin{equation}\label{eq:ratio-decomposition}
\frac{\phi(n)}{\pi(n)}
=
\frac{1+B(n)/A(n)}
     {1+\omega(n)/A(n)}.
\end{equation}

When \(A(n)>0\), write \(q_1<q_2<\cdots<q_{A(n)}\) for the prime totatives of \(n\).

\begin{lemma}\label{lem:denominator}
For every \(n\ge11\), one has \(A(n)\ge\omega(n)+1\). Moreover,
\(\omega(n)/A(n)\to0\) as \(n\to\infty\).
\end{lemma}
\begin{proof}
Put \(r=\omega(n)\). Since \(n\) is divisible by \(r\) distinct primes,
\[
n\ge p_r^\#\ge\prod_{j=1}^r(j+1)=(r+1)!.
\]
The inequality \(A(n)\ge r+1\) is equivalent to \(\pi(n)\ge2r+1\).

If \(r\le2\) and \(n\ge11\), then \(\pi(n)\ge5\). For \(3\le r\le6\), one has
\(p_7=17<30\), \(p_9=23<210\), \(p_{11}=31<2310\), and
\(p_{13}=41<30030\), giving \(p_{2r+1}<p_r^\#\le n\).

For \(r\ge7\), Bertrand's postulate gives \(p_m<2^m\) for \(m\ge2\). Since \(8!>2^{15}\), induction yields \((r+1)!>2^{2r+1}>p_{2r+1}\), and therefore \(\pi(n)\ge2r+1\).

For the asymptotic assertion, fix a positive integer \(h\). As \(r\to\infty\), factorial growth and Bertrand's postulate eventually give \((r+1)!>2^{(h+1)r}>p_{(h+1)r}\). Thus \(\pi(n)\ge(h+1)r\), so \(A(n)=\pi(n)-r\ge hr\). Since \(h\) is arbitrary, \(r/A(n)\to0\) along every sequence on which \(r\to\infty\). If \(r\) remains bounded while \(n\to\infty\), then \(\pi(n)\to\infty\), and hence \(A(n)=\pi(n)-r\to\infty\). The conclusion follows. \end{proof}

In particular, \(A(n)\to\infty\) as \(n\to\infty\).

\begin{lemma}\label{lem:prime-totative-location}
Let \(r=\omega(n)\) and \(A=A(n)\). Whenever \(A>0\),
\[
p_i\le q_i\le p_{i+r}\qquad(1\le i\le A),
\qquad
p_{A+r}\le n<p_{A+r+1}.
\]
As \(n\to\infty\), one has \(r=O\!\left(\frac{\log A}{\log\log A}\right)\). Moreover, for all sufficiently large \(n\), \(n>A\log A\).
\end{lemma}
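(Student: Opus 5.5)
The plan is to prove the four assertions in order: the two-sided location of \(q_i\), the bracket for \(n\), the bound on \(r\), and finally \(n>A\log A\).

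\emph{Location of \(q_i\).} Let \(r=\omega(n)\) and \(A=A(n)\). The primes up to \(n\) are \(p_1<\dots<p_{A+r}\), and exactly \(r\) of them divide \(n\). The remaining \(A\) primes, listed in increasing order, are \(q_1<\dots<q_A\).
\begin{itemize}
\item For the lower bound, \(q_1<\dots<q_i\) are \(i\) distinct primes, so the \(i\)-th of them is at least the \(i\)-th prime. Hence \(q_i\ge p_i\).
\item For the upper bound, at most \(r\) of the primes \(p_1,\dots,p_{i+r}\) divide \(n\). So at least \(i\) of them are totatives, provided \(i+r\le A+r\). Hence \(q_i\le p_{i+r}\).
\end{itemize}
The bracket \(p_{A+r}\le n<p_{A+r+1}\) is just the statement \(\pi(n)=A+r\).

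\emph{Bound on \(r\).} The proof of Lemma~\ref{lem:denominator} gives \(n\ge(r+1)!\). Taking logarithms, \(r\log r\ll\log n\), so \(r\ll\log n/\log\log n\). To convert this to \(A\), note that \(A=\pi(n)-r\ge\pi(n)/2\) for large \(n\) by Lemma~\ref{lem:denominator}. Chebyshev's bound \(\pi(n)\gg n/\log n\) then gives \(\log A=\log n+O(\log\log n)\), and therefore \(\log A\asymp\log n\) and \(\log\log A\sim\log\log n\). It follows that \(r=O(\log A/\log\log A)\).

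\emph{The inequality \(n>A\log A\).} Since \(A\le\pi(n)\), it suffices to show \(\pi(n)\log\pi(n)<n\) for large \(n\). This can be done in either of two ways.
\begin{itemize}
\item Use the classical explicit bound \(\pi(x)<x/(\log x-1.5)\) for \(x\ge 67\) (Rosser--Schoenfeld). Then \(\log\pi(n)\le\log n-\log(\log n-1.5)\), and so
\[
\pi(n)\log\pi(n)\le n\,\frac{\log n-\log(\log n-1.5)}{\log n-1.5}.
\]
The right-hand side is less than \(n\) once \(\log(\log n-1.5)>1.5\), which holds for all sufficiently large \(n\).
\item Alternatively, use the standard equivalence \(p_m\sim m\log m\) in the sharper form \(p_m>m\log m\) for all \(m\ge1\) (Rosser's theorem). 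With \(m=A+r\),
\[
n\ge p_{A+r}>(A+r)\log(A+r)\ge A\log A,
\]
with strict inequality. This is cleaner.
\end{itemize}

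I will cite Rosser's bound \(p_m>m\log m\). The only potential obstacle is the dependence on an explicit prime estimate. If citing Rosser is undesirable, the prime number theorem form \(p_m=m(\log m+\log\log m-1+o(1))\) suffices for all sufficiently large \(m\), and hence for all large \(n\), since \(A\to\infty\). Everything else is bookkeeping with \(\pi(n)=A+r\).
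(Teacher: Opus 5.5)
Your proposal is correct and follows essentially the paper's route: the same counting argument for \(p_i\le q_i\le p_{i+r}\), the factorial bound \(n\ge(r+1)!\) to control \(\omega(n)\), and the same chain \(n\ge p_{A+r}>(A+r)\log(A+r)\ge A\log A\) from Rosser's bound. The only minor variation is the passage from the \(n\)-scale to the \(A\)-scale: you use \(A\ge\pi(n)/2\) together with Chebyshev's lower bound, whereas the paper bounds \(n<p_{2A}\) via the Rosser--Schoenfeld upper bound for \(p_m\); both arguments simply establish \(\log n\asymp\log A\).
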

\begin{proof}
The inequality \(p_i\le q_i\) holds because \(q_i\) is the \(i\)-th member of a subset of the primes. Among the first \(i+r\) primes, at most \(r\) divide \(n\), so at least \(i\) are prime totatives; hence \(q_i\le p_{i+r}\). Since \(A+r=\pi(n)\), the remaining assertion about the position of \(n\) follows immediately.

We use the bounds recorded by Rosser and Schoenfeld in the corollary to their Theorem 3, equations (3.12)–(3.13):
\[
m\log m<p_m\quad(m\ge1),\qquad
p_m<m(\log m+\log\log m)\quad(m\ge6)
\]
\cite[Corollary to Theorem 3, (3.12)--(3.13)]{RosserSchoenfeld1962}.

By Lemma~\ref{lem:denominator}, \(r\le A-1\) for all sufficiently large \(n\). Hence
\[
n<p_{A+r+1}\le p_{2A}
<2A\bigl(\log(2A)+\log\log(2A)\bigr).
\]
Together with \(n\ge(r+1)!\), this gives \(\log((r+1)!)\ll\log A\). If \(r\) remains bounded, the asserted estimate is immediate because \(A\to\infty\). If \(r\to\infty\), factorial growth gives \(r\log r\ll\log A\); inverting the increasing function \(x\mapsto x\log x\) yields \(r=O\!\left(\frac{\log A}{\log\log A}\right)\).

Finally, the lower indexed-prime bound gives \(n\ge p_{A+r}>(A+r)\log(A+r)\ge A\log A\) for all sufficiently large \(n\). \end{proof}

The construction begins with a nonempty squarefree multiplier \(s\) formed from sufficiently large prime totatives, and then completes it by a further prime totative \(q_j\). The completion index is required to satisfy \(j>i(s)\), where \(i(s)\) records the largest prime-totative index occurring in \(s\); thus \(q_j\) becomes the unique largest prime factor of \(sq_j\), making the representation recoverable. The retention condition on \(s\,i(s)\) leaves enough room for admissible completions, while the endpoint \(J_s\) is chosen so that every permitted completion remains below \(n\). The lower cutoff \(R\) also keeps the construction away from the initial prime totatives, which will be important in the reciprocal-product estimates. We now make these choices precise.

For sufficiently large \(n\), put
\[
R=
\left\lceil
\frac{\log A(n)}
     {\sqrt{\log\log A(n)}}
\right\rceil,
\qquad
\delta_A=
\min\left\{
\frac12,\,
(\log\log A(n))^{-1/4}
\right\}.
\]
If \(s\) is a nonempty squarefree product of primes among \(q_R,q_{R+1},\ldots,q_{A(n)}\), put \(i(s)=\max\{i:q_i\mid s\}\). For every such \(s\) satisfying \(s\,i(s)\le A(n)^{1-\delta_A}\), define
\[
J_s=
\left\lfloor
\frac{A(n)\log A(n)}
{\left(1+\dfrac{\omega(n)}R\right)
 s\left(\log\dfrac{A(n)}s+3\log\log A(n)\right)}
\right\rfloor.
\]

\begin{definition}\label{def:construction} For sufficiently large \(n\), define
\[
\mathcal C_n
=
\left\{
sq_j:
\begin{array}{l}
s\ \text{is a nonempty squarefree product of }
q_R,\ldots,q_{A(n)},\\[1mm]
s\,i(s)\le A(n)^{1-\delta_A},\\[1mm]
i(s)<j\le J_s
\end{array}
\right\}.
\]

\end{definition}

\begin{proposition}\label{prop:construction-validity}
For all sufficiently large \(n\), every nonempty squarefree product \(s\) of \(q_R,\ldots,q_{A(n)}\) satisfying \(s\,i(s)\le A(n)^{1-\delta_A}\) also satisfies \(i(s)<J_s\le A(n)\). For every \(i(s)<j\le J_s\), the integer \(sq_j\) is a composite totative of \(n\) and satisfies \(sq_j<n\). Moreover, the parametrization \((s,j)\longmapsto sq_j\) is injective.
\end{proposition}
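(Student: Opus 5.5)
Write \(A=A(n)\), \(r=\omega(n)\) and \(L=\log A\). We work with the real quantity
\[
X_s=\frac{A L}{(1+r/R)\,s\,(\log(A/s)+3\log L)},
\]
so that \(J_s=\lfloor X_s\rfloor\). The plan has four steps in order: an upper bound \(J_s\le A\), a lower bound \(J_s>i(s)\), the inequality \(sq_j<n\), and then the totative, compositeness and injectivity claims, which are short.

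For the upper bound, note that \(s\ge q_R\ge p_R>R\), and \(R\to\infty\). Also \(s\le A\), so \(\log(A/s)+3\log L\ge 3\log L\). This gives
\[
X_s\le \frac{AL}{3R\log L}.
\]
Since \(R\approx L/\sqrt{\log L}\), we get \(X_s\le A/(3\sqrt{\log L})<A\) for large \(n\).

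For the lower bound, set \(i=i(s)\). The retention condition gives \(si\le A^{1-\delta_A}\), so \(\log(A/s)\le L\). Lemma~\ref{lem:prime-totative-location} gives \(r=O(L/\log L)\), so \(r/R=O(1/\sqrt{\log L})\) and \(1+r/R\le 2\). Hence
\[
X_s\ge \frac{AL}{2s(L+3\log L)}\ge \frac{A}{3s}.
\]
Dividing by \(i\) and using the retention condition,
\[
\frac{X_s}{i}\ge\frac{A}{3si}\ge\frac{A^{\delta_A}}{3}.
\]
Now \(A^{\delta_A}=\exp(L(\log L)^{-1/4})\to\infty\). Therefore \(X_s\ge i+1\) for large \(n\), uniformly in \(s\), and so \(J_s\ge i+1>i\).

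The main step is \(sq_j<n\) for \(j\le J_s\). By Lemma~\ref{lem:prime-totative-location}, \(q_j\le p_{j+r}\). Since \(j\le X_s\le A\) and \(j+r\ge 6\), the Rosser–Schoenfeld upper bound applies:
\[
q_j< (j+r)\bigl(\log(j+r)+\log\log(j+r)\bigr).
\]
The factor \(1+r/R\) is there to absorb \(r\). Because \(j>i\ge R\), we have \(j+r\le j(1+r/R)\le X_s(1+r/R)\). Writing \(Y=X_s(1+r/R)\), we therefore have
\[
sY=\frac{AL}{\log(A/s)+3\log L},
\qquad
sq_j< sY\bigl(\log Y+\log\log Y\bigr).
\]
It remains to bound \(\log Y+\log\log Y\) by \(\log(A/s)+3\log L\). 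From \(\log Y=\log(A/s)+\log L-\log(\log(A/s)+3\log L)\le \log(A/s)+\log L\), and from \(\log\log Y\le\log\log A\cdot(1+o(1))\le 2\log L\) (since \(Y\le A\)), we get
\[
\log Y+\log\log Y\le \log(A/s)+3\log L.
\]
Consequently \(sq_j< AL=A\log A< n\), the last inequality by Lemma~\ref{lem:prime-totative-location}. This is where the constant \(3\) in the definition of \(J_s\) is calibrated, so it is the step most likely to need care with the lower-order terms. If the margin turns out to be too thin, I would first check whether the bound \(Y\le A\) can be replaced by the stronger \(Y\le A/\sqrt{\log L}\), obtained from the upper-bound step, to gain slack in \(\log\log Y\).

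Finally, \(sq_j\) is a product of at least two primes \(q_i\), all of which are coprime to \(n\). It is \(<n\), so it is a composite totative of \(n\). It is squarefree because \(j>i(s)\) means \(q_j\nmid s\). For injectivity, \(q_j\) is recovered as the largest prime factor of \(m=sq_j\), since every prime factor of \(s\) is at most \(q_{i(s)}<q_j\). Then \(s=m/q_j\), so the pair \((s,j)\) is determined by \(m\).
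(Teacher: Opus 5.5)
Your proposal is correct and follows essentially the paper's route. You get $J_s>i(s)$ from $A^{\delta_A}\to\infty$ and $J_s\le A$ from $s\ge q_R$, bound $sq_j$ via $q_j\le p_{j+r}$, $j+r\le(1+r/R)j$, the Rosser--Schoenfeld upper bound, comparison with $\log(A/s)+3\log L$, and $A\log A<n$, and prove injectivity through the unique largest prime factor. Your worry about a thin margin is unfounded: $Y\le A$ already gives $\log Y+\log\log Y\le\log(A/s)+2\log L$, so the finer lower bound on $\log\bigl(\log(A/s)+3\log L\bigr)$ that the paper uses is not needed.
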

\begin{proof}
Write \(A=A(n)\), \(r=\omega(n)\), \(X=\log A\), and \(Y=\log X\), and set \(\kappa=1+r/R\) and \(D_s=\log(A/s)+3Y\).
We first verify that every retained multiplier has a nonempty admissible completion interval contained in the available range of prime totatives.
By Lemma~\ref{lem:prime-totative-location}, \(r/R=O(Y^{-1/2})=o(1)\). Also \(\delta_A\to0\), \(\delta_AX\to\infty\), and \(A^{\delta_A}\to\infty\).

If \(s\,i(s)\le A^{1-\delta_A}\), then \(A/(s\,i(s))\ge A^{\delta_A}\) and \(D_s\le X+3Y\). Consequently
\[
\frac{AX}{\kappa sD_s\,i(s)}
\ge
\frac{A^{\delta_A}X}{\kappa(X+3Y)}
\longrightarrow\infty,
\]
so \(J_s>i(s)\) uniformly over the retained multipliers.

The same retention condition gives \(s\le A^{1-\delta_A}/i(s)\le A^{1-\delta_A}/R\) and \(D_s\ge\delta_AX+\log R\).
Since every multiplier is nonempty, \(s\ge q_R\). The lower indexed-prime estimate gives \(q_R>R\log R\), and therefore \(\delta_A q_R\to\infty\). Hence
\[
\frac{J_s}{A}
\le
\frac{X}{\kappa sD_s}
\le
\frac1{\kappa\delta_A s}
=o(1)
\]
uniformly, so \(J_s\le A\) for all sufficiently large \(n\).

It remains to verify that every permitted completion produces an integer below \(n\). Let \(i(s)<j\le J_s\). Since \(j\ge R\), one has \(r+j\le(1+r/R)j=\kappa j\le AX/(sD_s)\).
Set \(m_s=AX/(sD_s)\). From \(D_s\ge\delta_AX+\log R\) and the eventual identity \(\delta_A=Y^{-1/4}\), one has \(\log D_s\ge Y-\tfrac14\log Y\) for all sufficiently large \(n\). Since \(\log m_s=\log(A/s)+Y-\log D_s\), it follows that \(\log m_s+\log\log m_s<D_s\) uniformly over the retained multipliers. Lemma~\ref{lem:prime-totative-location} and the Rosser--Schoenfeld upper bound therefore give
\[
q_j
\le p_{r+j}
<
m_s(\log m_s+\log\log m_s)
<
\frac{AX}{s}.
\]
Thus \(sq_j<AX<n\).
The remaining assertions follow directly from the prime-factor structure of the construction.
Every prime factor of \(s\), and also \(q_j\), is a prime totative of \(n\), so \((sq_j,n)=1\). Since \(s\ne1\) and \(j>i(s)\), the prime \(q_j\) does not divide \(s\); hence \(sq_j\) is composite. Every prime factor of \(s\) is smaller than \(q_j\), so \(q_j\) is the unique largest prime factor of \(sq_j\). If \(sq_j=s'q_{j'}\), unique factorization first gives \(q_j=q_{j'}\), hence \(j=j'\), and then \(s=s'\). The parametrization is injective. \end{proof}

\begin{theorem}\label{thm:sharp-construction} As \(n\to\infty\),
\[
\frac{|\mathcal C_n|}{A(n)}
\ge
\left(e^{-\gamma}-o(1)\right)
\frac{\log A(n)}{\log\log A(n)}.
\]

\end{theorem}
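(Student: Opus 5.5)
We bound \(|\mathcal C_n|\) from below by summing the lengths of the completion intervals. Write \(A=A(n)\), \(X=\log A\), \(Y=\log X\), \(\kappa=1+\omega(n)/R\) and \(D_s=\log(A/s)+3Y\), as in the proof of Proposition~\ref{prop:construction-validity}. Let \(\mathcal S\) be the set of retained multipliers. Proposition~\ref{prop:construction-validity} shows that the map \((s,j)\mapsto sq_j\) is injective and that \(i(s)<J_s\). Hence
\[
|\mathcal C_n|=\sum_{s\in\mathcal S}\bigl(J_s-i(s)\bigr)
\ge \frac{AX}{\kappa}\sum_{s\in\mathcal S}\frac{1}{sD_s}-\sum_{s\in\mathcal S}\bigl(i(s)+1\bigr).
\]
The error sum should be negligible. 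Each retained \(s\) satisfies \(i(s)+1\le 2A^{1-\delta_A}/s\), so the error is at most \(2A^{1-\delta_A}\sum_{s}1/s\). Mertens' theorem together with \(q_i\le p_{i+r}\) from Lemma~\ref{lem:prime-totative-location} gives
\[
\sum_{s}\frac1s\le\prod_{R\le i\le A}\Bigl(1+\frac1{q_i}\Bigr)=O(X).
\]
Since \(A^{-\delta_A}=\exp(-XY^{-1/4})\) decays faster than any power of \(X\), the error is \(o(AX/Y)\). Also \(\kappa=1+O(Y^{-1/2})\) by Lemma~\ref{lem:prime-totative-location}, so \(\kappa\) can be dropped. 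It therefore suffices to prove
\[
\sum_{s\in\mathcal S}\frac{X}{sD_s}\ \ge\ \bigl(e^{-\gamma}-o(1)\bigr)\frac{X}{Y}.
\]

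To evaluate the left side, normalize by the total reciprocal mass. Put \(P=\prod_{R\le i\le A}(1+1/q_i)\) and let \(\mu\) be the probability measure on all nonempty squarefree products \(s\) of \(q_R,\ldots,q_A\) with \(\mu(s)\propto 1/s\). The \(q_i\) are sandwiched between \(p_i\) and \(p_{i+r}\) with \(r=O(X/Y)\), and \(\log q_R\sim\log R\sim Y\) while \(\log q_A\sim X\). So Mertens' theorem gives \(P\sim X/Y\), and the target becomes
\[
\mathbb E_\mu\Bigl[\frac{X}{D_s}\,\mathbf 1_{\{s\,i(s)\le A^{1-\delta_A}\}}\Bigr]\ \ge\ e^{-\gamma}-o(1).
\]
The key step is a limit law for \(\log s/X\) under \(\mu\). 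The factor \(1/D_s\) already penalizes large \(s\), so I would use that directly rather than analyse the joint law with \(i(s)\).

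Under \(\mu\) the primes \(q_i\) enter independently with probabilities \(1/(q_i+1)\). On the scale \(\log q_i/X\in[0,1]\), the prime number theorem turns this into a Poisson process with intensity \(dt/t\), truncated below at \(Y/X\to0\). Hence \(\log s/X\) converges in distribution to the sum of the points of that process in \([0,1]\), which is the perpetuity \(T\) with \(T\overset{d}{=}U(1+T')\).

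The retention condition is asymptotically harmless, because \(\log i(s)\) is essentially the logarithm of the largest chosen prime. Concretely, I would restrict to the event that the largest point of the process is at most \(1-\epsilon\). On that event the retention condition holds for large \(n\), and the event gains probability tending to \(1\) as \(\epsilon\to0\). Handling this boundary uniformly is the main obstacle. My first attempt would be weak convergence of \(\mu\)-laws of the pair \((\log s/X,\log q_{i(s)}/X)\), combined with Fatou's lemma, since only a lower bound is needed.

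A naive limit gives \(X/D_s\to 1/(1-\log s/X)\), which blows up near \(\log s/X=1\). So the correct identification must account for the truncation of \(\mu\) at \(q_A\) and for the retention cutoff. I would verify the final integral by computing through the recursion \(T=U(1+T')\) instead of relying on the heuristic \(\mathbb E(1+T)^{-1}=e^{-\gamma}\) quoted in the introduction. The computation would use the Laplace transform \(\mathbb E e^{-\lambda T}=\exp\bigl(-\int_0^1(1-e^{-\lambda t})\,dt/t\bigr)\) together with \(\int_0^\infty e^{-\lambda}\,\mathbb E e^{-\lambda T}\,d\lambda=\mathbb E(1+T)^{-1}=e^{-\gamma}\). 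If the truncated model turns out to give a slightly different functional, matching it exactly to \(e^{-\gamma}\) is where I expect the real work to lie.
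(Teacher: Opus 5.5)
\textbf{There is a genuine gap.} Your reduction to \(\sum_{s}X/(sD_s)\) is sound and agrees with the paper, including the disposal of the \(i(s)+1\) error and of \(\kappa\). The step carrying the real content, identifying the limit of \(\mathbb E_\mu[(X/D_s)\mathbf 1_{\{s\,i(s)\le A^{1-\delta_A}\}}]\), rests on a false claim.

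The retention condition is not asymptotically harmless, because it bounds the product \(s\,i(s)\), not the largest prime. If the largest selected prime is \(q_{i(s)}\approx A^{u}\), then \(\log s/X\ge u-o(1)\) and \(\log i(s)/X=u+o(1)\). So retention forces \(u\le 1/2+o(1)\). In the limit, \((\log S/X,\log i(S)/X)\Rightarrow(U(1+T),U)\), and the retention event becomes \(\{U(T+2)\le1\}\), of probability \(\mathbb E(T+2)^{-1}<1/2\).

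In particular, the event ``largest point \(\le 1-\epsilon\)'' is not contained in the retention event. Already \(s=q_j\) with \(q_j\approx A^{0.9}\) violates retention. So your Fatou step has no valid event to work on. Note also that \(X/D_s=(1-\log s/X+3Y/X)^{-1}\) increases with \(s\), so it does not penalize large \(s\). With only the marginal \(\log s/X\Rightarrow T\), the natural limit would be \(\mathbb E[(1-T)^{-1};T<1]\), which is infinite because \(T\) has density \(e^{-\gamma}\) on \([0,1]\). The retention cutoff, through its dependence on \(i(s)\), is exactly what makes the answer finite. You cannot bypass the joint law with \(i(s)\), and you do not identify its limit or use it on the correct event.

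The missing ingredient is that joint limit, applied with a fixed retention parameter \(\delta\). The paper derives it from the mixed transform \(\mathbb E[e^{-tV_n};\log i(S)/X\le u]\to uL(tu)\), together with tightness and uniqueness of Laplace transforms. The test function \(\mathbf 1_{\{v+w\le1-\delta\}}/(1-v)\) is bounded by \(1/\delta\), and its discontinuity set \(\{U(T+2)=1-\delta\}\) is null. Hence the normalized sum tends to \(c_\delta=\mathbb E[\mathbf 1_{\{U(T+2)\le1-\delta\}}/(1-U(T+1))]\).

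Integrating over \(U\) for fixed \(T\) gives \(c_\delta=\mathbb E\bigl[(T+1)^{-1}\log\bigl((T+2)/(1+\delta(T+1))\bigr)\bigr]\). This increases to \(\mathbb E[\log(T+2)/(T+1)]\) as \(\delta\to0\). The perpetuity relation \(T\overset d=U(1+T')\) gives \(\mathbb E[\log(T+2)/(T+1)]=\mathbb E[(1+U(1+T))^{-1}]=\mathbb E(1+T)^{-1}=e^{-\gamma}\) at once. So the ``real work'' you defer does not exist: the correct functional equals \(e^{-\gamma}\) exactly, but only once the retention event is correctly identified as \(\{U(T+2)\le1\}\).

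The paper then passes from fixed \(\delta\) to \(\mathcal C_n\) as follows. Eventually \(\delta_A<\delta\), so \(\mathcal C_{n,\delta}\subseteq\mathcal C_n\), and \(0\le e^{-\gamma}-c_\delta\le\delta\).
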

\begin{proof}
Write \(A=A(n)\), \(r=\omega(n)\), \(X=\log A\), and \(Y=\log X\). Then \(R=\lceil X/\sqrt Y\rceil\), and Lemma~\ref{lem:prime-totative-location} gives \(r/R=O(Y^{-1/2})=o(1)\).
We begin by determining the reciprocal-product scale of the available prime totatives.
For \(R\le i\le A\), Lemma~\ref{lem:prime-totative-location} and the Rosser--Schoenfeld bounds imply
\[
\frac1{(i+r)(\log(i+r)+\log\log(i+r))}
<
\frac1{q_i}
<
\frac1{i\log i}.
\]
Let \(f(x)=1/(x\log x)\). Uniformly for \(R\le m\le A\),
\[
0\le
\sum_{i=R}^{m}\bigl(f(i)-f(i+r)\bigr)
\le
\sum_{i=R}^{R+r-1}f(i)
\ll\frac{r}{R\log R}=o(1).
\]
Moreover,
\[
0\le
\frac1{k\log k}
-
\frac1{k(\log k+\log\log k)}
\le
\frac{\log\log k}{k(\log k)^2},
\]
and the sum of the last expression over \(k\ge R\) is \(O((\log\log R+1)/\log R)=o(1)\).
An integral comparison for \(1/(x\log x)\) therefore gives, uniformly for \(R\le m\le A\),
\[
\begin{aligned}
\sum_{i=R}^{m}\frac1{q_i}
&=\log\log m-\log\log R+o(1),\\
\sum_{i=R}^{A}\frac1{q_i^2}
&\le\sum_{i=R}^{\infty}\frac1{(i\log i)^2}=o(1).
\end{aligned}
\]

Set \(Z_m=\prod_{i=R}^{m}\left(1+1/q_i\right)\).
The preceding estimates and \(\log(1+x)=x+O(x^2)\) yield the reciprocal-product asymptotics
\begin{equation}\label{eq:reciprocal-product-asymptotics}
Z_A=(1+o(1))\frac{X}{Y},
\qquad
\frac{Z_{\lfloor A^u\rfloor}}{Z_A}\longrightarrow u
\quad(0<u\le1\ \text{fixed}).
\end{equation}
We next identify the joint limiting law governing a reciprocal-random multiplier and its largest selected prime factor.
Consider all squarefree products \(S\) of \(q_R,\ldots,q_A\), including the empty product, with reciprocal probability
\[
\mathbb P(S=s)=\frac1{Z_As}.
\]
For probabilistic bookkeeping only, set \(i(1)=1\) and \(V_n=\log S/X\).
For fixed \(t\ge0\) and \(0<u\le1\), the event \(i(S)\le\lfloor A^u\rfloor\) restricts the squarefree product to the first \(\lfloor A^u\rfloor-R+1\) available prime totatives. Hence the exact finite Euler-product identity is
\[
\mathbb E\!\left[
e^{-tV_n};
\frac{\log i(S)}X\le u
\right]
=
\frac1{Z_A}
\prod_{i=R}^{\lfloor A^u\rfloor}
\left(1+q_i^{-1-t/X}\right).
\]

Define, for \(t\ge0\),
\[
L(t)=
\exp\left(
-\int_0^1\frac{1-e^{-tx}}x\,dx
\right).
\]
The function \((1-e^{-tx})/x\) extends continuously to \(x=0\), with value \(t\). The reciprocal estimate above gives
\[
\sum_{i=R}^{\lfloor A^u\rfloor}
\left|
\frac1{q_i}-\frac1{i\log i}
\right|
=o(1).
\]
Furthermore, uniformly for \(i\ge R\), \(0\le\log q_i-\log i\ll\log\log i\). Since \(x\mapsto1-e^{-tx}\) is Lipschitz,
\[
\sum_{i=R}^{\lfloor A^u\rfloor}
\frac{
\left|
(1-e^{-t\log q_i/X})-(1-e^{-t\log i/X})
\right|}
{q_i}
\ll_t
\frac1X
\sum_{i=R}^{\lfloor A^u\rfloor}
\frac{\log\log i}{i\log i}
=
O_t\!\left(\frac{Y^2}{X}\right)
=o(1).
\]

To pass from the resulting sum over \(i\) to the integral, set
\[
x_i=\frac{\log i}{X},\qquad
h_t(x)=
\begin{cases}
\dfrac{1-e^{-tx}}x,&x>0,\\[2mm]
t,&x=0.
\end{cases}
\]
Then \(h_t\) is continuous on \([0,1]\), and \(x_{i+1}-x_i=1/(iX)+O(1/(i^2X))\). Since \(h_t\) is bounded on \([0,1]\) and \(\sum_{i\ge R}1/(i^2X)=o(1)\), replacing \(1/(iX)\) by \(x_{i+1}-x_i\) introduces only \(o(1)\). Since \((1-e^{-t\log i/X})/(i\log i)=h_t(x_i)/(iX)\), the resulting expression is an ordinary Riemann sum. Together with the preceding comparison, this gives
\[
\begin{aligned}
\sum_{i=R}^{\lfloor A^u\rfloor}
\frac{1-e^{-t\log i/X}}{i\log i}
&\longrightarrow
\int_0^u\frac{1-e^{-tx}}x\,dx,\\
\sum_{i=R}^{\lfloor A^u\rfloor}
\frac{1-e^{-t\log q_i/X}}{q_i}
&\longrightarrow
\int_0^u\frac{1-e^{-tx}}x\,dx.
\end{aligned}
\]
Expanding the logarithm of the exact Euler product, with total quadratic error \(o(1)\), gives
\[
\mathbb E\!\left[
e^{-tV_n};
\frac{\log i(S)}X\le u
\right]
\longrightarrow
uL(tu).
\]

Let \(U_1,U_2,\ldots\) be independent random variables uniformly distributed on \([0,1]\), and set
\[
T=U_1+U_1U_2+U_1U_2U_3+\cdots.
\]
By monotone convergence, \(\mathbb ET=\sum_{k\ge1}2^{-k}=1\), so the series converges to a finite value almost surely. Splitting off its first factor gives the perpetuity relation
\[
T\overset d=U(1+T'),
\]
where \(U\) is uniform on \([0,1]\), \(T'\overset d=T\), and \(U,T'\) are independent.

If \(F(t)=\mathbb E e^{-tT}\), conditioning on \(U\), followed by the change of variables \(y=tu\) and differentiation, gives
\[
\begin{aligned}
F(t)&=\int_0^1e^{-tu}F(tu)\,du,\\
tF(t)&=\int_0^t e^{-y}F(y)\,dy,\\
\frac{F'(t)}{F(t)}&=-\frac{1-e^{-t}}t.
\end{aligned}
\]
Since \(F(0)=1\), one has \(F=L\). In particular,
\[
\mathbb E\!\left[e^{-tU(1+T)};U\le u\right]
=
\int_0^u e^{-tx}L(tx)\,dx
=
uL(tu),
\]
the last equality following from \(\frac{d}{dx}\{xL(tx)\}=e^{-tx}L(tx)\).

It remains to identify the joint limit. First, \(V_n\) is tight. The function
\(h(x)=\log x/(x+1)\) satisfies
\(h'(x)=(1+1/x-\log x)/(x+1)^2<0\) for \(x\ge4\).
Since \(R\to\infty\) and \(q_i>i\log i\), it follows uniformly for
\(i\ge R\), once \(n\) is sufficiently large, that
\(\log q_i/(q_i+1)\le \log(i\log i)/(i\log i)\). Hence
\[
\begin{aligned}
\mathbb EV_n
&=\frac1X\sum_{i=R}^{A}\frac{\log q_i}{q_i+1}\\
&\le
\frac1X\sum_{i=R}^{A}
\left(
\frac1i+\frac{\log\log i}{i\log i}
\right)
=O(1).
\end{aligned}
\]
Thus \(V_n\) is tight, while \(\log i(S)/X\in[0,1]\). It follows that the joint laws are tight, and every subsequence has a further weakly convergent subsequence.

Taking \(t=0\) in the mixed-transform limit gives
\(\mathbb P(\log i(S)/X\le u)\to u\). Consider a weakly convergent further subsequence along which
\[
\left(V_n,\frac{\log i(S)}X\right)
\Rightarrow(\widetilde V,\widetilde U).
\]
The second marginal is uniform and therefore atomless. For fixed \(t\ge0\) and
\(0<u\le1\), the bounded function
\((v,w)\mapsto e^{-tv}\mathbf1_{\{w\le u\}}\) is discontinuous only on \(w=u\),
which has limiting probability zero. Hence
\[
\mathbb E[e^{-t\widetilde V};\widetilde U\le u]
=
uL(tu).
\]

For each fixed \(u\), the left-hand side is the Laplace transform of the finite measure
\(E\mapsto\mathbb P(\widetilde V\in E,\widetilde U\le u)\) on \([0,\infty)\).
Such a measure is determined by its Laplace transform. Indeed, push it forward under
\(x\mapsto e^{-x}\), and regard the pushforward as a finite measure on \([0,1]\)
by assigning zero mass to \(0\). Equality of Laplace transforms at the nonnegative
integers is then equality of all polynomial moments on \([0,1]\). By the Weierstrass
approximation theorem, the two finite measures agree. Thus, for every \(u\), the
finite measures associated with the limiting law agree with those associated with
\((U(1+T),U)\). The rectangles \(E\times[0,u]\), with
\(E\subseteq[0,\infty)\) Borel and \(0\le u\le1\), generate the Borel
\(\sigma\)-algebra on \([0,\infty)\times[0,1]\); hence these identities determine
the joint law. Every subsequential limit is therefore the same, and
\begin{equation}\label{eq:joint-limit}
\left(
V_n,\frac{\log i(S)}X
\right)
\Rightarrow
\bigl(U(1+T),U\bigr).
\end{equation}

We use this limiting law to obtain a lower bound for the number of retained completions.

Fix \(0<\delta<1\). Since \(\delta_A\to0\), we have \(\delta_A<\delta\) for all sufficiently large \(n\). Hence every nonempty squarefree product \(s\) satisfying \(s\,i(s)\le A^{1-\delta}\) also satisfies \(s\,i(s)\le A^{1-\delta_A}\), so the endpoint \(J_s\) from Definition~\ref{def:construction} is already defined for every multiplier considered below.

Within this proof, let \(\mathcal C_{n,\delta}\) denote the family obtained from Definition~\ref{def:construction} by replacing \(\delta_A\) with \(\delta\), leaving \(R\) and \(J_s\) unchanged. The argument of Proposition~\ref{prop:construction-validity}, with \(\delta\) fixed, shows that its completion intervals are valid and that the resulting parametrization is injective.

For the remainder of the fixed-\(\delta\) argument, put \(\kappa=1+r/R\) and \(D_s=\log(A/s)+3Y\). By injectivity, \(|\mathcal C_{n,\delta}|=\sum_{\text{retained }s}(J_s-i(s))\). Using \(\lfloor x\rfloor\ge x-1\) yields the principal counting inequality
\[
\frac{|\mathcal C_{n,\delta}|}{A}
\ge
\frac{X}{\kappa}
\sum_{\text{retained }s}\frac1{sD_s}
-
\frac1A
\sum_{\text{retained }s}\bigl(i(s)+1\bigr).
\]
Retention gives \(i(s)\le A^{1-\delta}/s\), and therefore \(A^{-1}\sum_{\text{retained }s}i(s)\le A^{-\delta}Z_A=o(X/Y)\). The retained multipliers are distinct positive integers not exceeding \(A^{1-\delta}/R\), so their number divided by \(A\) is also \(o(X/Y)\). Hence
\[
\frac{|\mathcal C_{n,\delta}|}{A}
\ge
\frac{X}{\kappa}
\sum_{\text{retained }s}\frac1{sD_s}
-o(X/Y).
\]

The reciprocal probability space contains the empty product, whereas \(\mathcal C_{n,\delta}\) uses only nonempty multipliers. For \(s\ne1\), the retention condition is equivalent to \(V_n+\log i(S)/X\le1-\delta\), while \(X/D_s=(1-V_n+3Y/X)^{-1}\). Thus the exact correction is
\[
\begin{aligned}
\frac{X}{\kappa}
\sum_{\substack{s\ne1\\ s\ {\rm retained}}}
\frac1{sD_s}
&=
\frac{Z_A}{\kappa}
\mathbb E\!\left[
\frac{
\mathbf1_{\{
V_n+\log i(S)/X\le1-\delta
\}}
}{
1-V_n+3Y/X
}
\right]\\
&\quad-
\frac{X}{\kappa(X+3Y)}.
\end{aligned}
\]
The last term is \(O(1)=o(X/Y)\).

On the event in the expectation one has \(V_n\le1-\delta\); hence both \(1-V_n\) and \(1-V_n+3Y/X\) are at least \(\delta\). Replacing \((1-V_n+3Y/X)^{-1}\) by \((1-V_n)^{-1}\) therefore changes the expectation by \(O_\delta(Y/X)=o(1)\), uniformly in \(n\). The resulting test function equals \((1-v)^{-1}\) when \(v+u\le1-\delta\) and \(0\) otherwise. It is bounded by \(1/\delta\) and is continuous away from the boundary \(v+u=1-\delta\). Under the limiting law this boundary is the event \(U(T+2)=1-\delta\), which has probability zero because, conditional on \(T\), \(U(T+2)\) has a continuous distribution. By \eqref{eq:joint-limit},
\[
\begin{aligned}
\mathbb E\!\left[
\frac{
\mathbf1_{\{
V_n+\log i(S)/X\le1-\delta
\}}
}{
1-V_n+3Y/X
}
\right]
&\longrightarrow c_\delta,\\
c_\delta
&=
\mathbb E\!\left[
\frac{
\mathbf1_{\{U(T+2)\le1-\delta\}}
}{
1-U(T+1)
}
\right].
\end{aligned}
\]
Combining \eqref{eq:reciprocal-product-asymptotics} with \(\kappa\to1\) yields
\[
\frac{|\mathcal C_{n,\delta}|}{A}
\ge
(c_\delta+o(1))\frac{X}{Y}.
\]

It remains to evaluate the limiting constant and then remove the fixed retention parameter. Conditioning on \(T\), and then letting \(\delta\downarrow0\), gives
\[
\begin{aligned}
c_\delta
&=
\mathbb E\!\left[
\frac{
\log\!\left(
\dfrac{T+2}{1+\delta(T+1)}
\right)}
{T+1}
\right],\\
c_\delta&\uparrow c_0
=
\mathbb E\frac{\log(T+2)}{T+1}.
\end{aligned}
\]
The perpetuity relation gives \(c_0=\mathbb E(1+T)^{-1}\).
By Tonelli's theorem and \(F=L\),
\[
\mathbb E\frac1{1+T}
=
\int_0^\infty e^{-t}L(t)\,dt
=
\lim_{t\to\infty}tL(t),
\]
because \((tL(t))'=e^{-t}L(t)\).

Finally, \(-\log L(t)=\int_0^t(1-e^{-y})/y\,dy\). NIST DLMF, equation (5.9.18), gives the integral representation
\[
\gamma
=
\int_0^1\frac{1-e^{-y}}y\,dy
-
\int_1^\infty\frac{e^{-y}}y\,dy
\]
\cite[(5.9.18)]{DLMF}.
It follows that \(\int_0^t(1-e^{-y})/y\,dy=\log t+\gamma+o(1)\), and therefore
\[
c_0=e^{-\gamma}.
\]
Moreover,
\[
0\le e^{-\gamma}-c_\delta
=
\mathbb E
\frac{\log(1+\delta(T+1))}{T+1}
\le\delta.
\]

We return to the family \(\mathcal C_n\) of Definition~\ref{def:construction}. Given \(\varepsilon>0\), choose a fixed \(0<\delta_0<\varepsilon/2\). Since \(\delta_A\to0\), eventually \(\delta_A<\delta_0\), and hence \(s\,i(s)\le A^{1-\delta_0}\) implies \(s\,i(s)\le A^{1-\delta_A}\). The endpoint \(J_s\) does not depend on the retention parameter, so \(\mathcal C_{n,\delta_0}\subseteq\mathcal C_n\) for all sufficiently large \(n\). The fixed-\(\delta_0\) estimate and \(c_{\delta_0}\ge e^{-\gamma}-\delta_0\) give \(|\mathcal C_n|/A\ge(e^{-\gamma}-\varepsilon)X/Y\) eventually. Since \(\varepsilon>0\) is arbitrary,
\[
\frac{|\mathcal C_n|}{A(n)}
\ge
\left(e^{-\gamma}-o(1)\right)
\frac{\log A(n)}{\log\log A(n)}.
\]
\end{proof}

\begin{corollary}\label{cor:ratio-consequences}
As \(n\to\infty\),
\[
\begin{aligned}
\frac{B(n)}{A(n)}
&\ge
\left(e^{-\gamma}-o(1)\right)
\frac{\log A(n)}{\log\log A(n)},\\
\frac{\phi(n)}{\pi(n)}
&=
\frac{B(n)}{A(n)}(1+o(1)).
\end{aligned}
\]
Consequently, \(\phi(n)/\pi(n)\ge (e^{-\gamma}-o(1))\log A(n)/\log\log A(n)\), and both \(B(n)/A(n)\) and \(\phi(n)/\pi(n)\) tend to infinity.
\end{corollary}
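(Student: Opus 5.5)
The first bound follows from the construction. By Proposition~\ref{prop:construction-validity}, every member of \(\mathcal C_n\) is a composite totative of \(n\). In particular every member is a nonprime totative, so \(\mathcal C_n\) is a subset of the set counted by \(B(n)\), and \(B(n)\ge|\mathcal C_n|\). Theorem~\ref{thm:sharp-construction} then gives
\[
\frac{B(n)}{A(n)}\ge\frac{|\mathcal C_n|}{A(n)}\ge\left(e^{-\gamma}-o(1)\right)\frac{\log A(n)}{\log\log A(n)}.
\]

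For the second assertion I will use the decomposition \eqref{eq:ratio-decomposition}, which is valid because \(A(n)>0\) for \(n\ge11\) by Lemma~\ref{lem:denominator}. It gives
\[
\frac{\phi(n)}{\pi(n)}=\frac{1+B(n)/A(n)}{1+\omega(n)/A(n)}.
\]
The denominator is \(1+o(1)\) by the asymptotic part of Lemma~\ref{lem:denominator}. Since \(A(n)\to\infty\), the first bound forces \(B(n)/A(n)\to\infty\). Hence \(1+B(n)/A(n)=(B(n)/A(n))(1+o(1))\). Dividing the two expansions yields \(\phi(n)/\pi(n)=(B(n)/A(n))(1+o(1))\).

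The consequence follows by combining the two parts. Multiplying the lower bound for \(B(n)/A(n)\) by the factor \(1+o(1)\) leaves the leading constant unchanged, so
\[
\frac{\phi(n)}{\pi(n)}\ge\left(e^{-\gamma}-o(1)\right)\frac{\log A(n)}{\log\log A(n)}.
\]
Since \(\log A/\log\log A\to\infty\) as \(A\to\infty\), both ratios tend to infinity.

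The only point needing care is that the \(o(1)\) terms are absorbed correctly. This works because \(B(n)/A(n)\) tends to infinity rather than remaining bounded, so the additive \(1\) in the numerator is negligible. There is no real obstacle.
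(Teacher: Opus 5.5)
Your proposal is correct and follows essentially the same route as the paper. The paper also uses the inclusion of \(\mathcal C_n\) among the nonprime totatives to get \(|\mathcal C_n|\le B(n)\), applies Theorem~\ref{thm:sharp-construction} for the first bound, and then derives the ratio comparison from \eqref{eq:ratio-decomposition} using \(\omega(n)/A(n)\to0\) and \(B(n)/A(n)\to\infty\).
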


\begin{proof}
By Proposition~\ref{prop:construction-validity}, every element of \(\mathcal C_n\) is counted by \(B(n)\), so \(|\mathcal C_n|\le B(n)\); Theorem~\ref{thm:sharp-construction} therefore gives the first estimate. Since \(\log A(n)/\log\log A(n)\to\infty\), we have \(B(n)/A(n)\to\infty\). Moreover, Lemma~\ref{lem:denominator} gives \(\omega(n)/A(n)\to0\). Hence \eqref{eq:ratio-decomposition} yields \(\phi(n)/\pi(n)=(B(n)/A(n))(1+o(1))\), and the remaining assertions follow.
\end{proof}

\subsection{Quantitative Refinement and Classical Optimality}\label{subsec:effective}

Theorem~\ref{thm:sharp-construction} gives the sharp leading constant for the explicit family \(\mathcal C_n\), but only with an unspecified \(o(1)\) error. We seek an effective quantitative estimate for the same family, with exactly the same defining parameters, by controlling the approximation mechanism already used in the sharp asymptotic argument rather than constructing a different family.

The target constant is not merely an artifact of that argument. As recalled below from Landau's minimal-order theorem and the prime number theorem, \(e^{-\gamma}\) is the classical uniform leading constant for the full ratios at the scale \(\log n/\log\log n\). Thus the quantitative problem is to measure how effectively the explicit construction approaches this classical ceiling. The main additional input is a quantitative stability estimate for the reciprocal multiplier law, together with control of the moving retention boundary.

The stability argument is organized by the largest selected prime-totative factor. Conditional on its index, the factors below it retain a reciprocal product law whose normalized logarithm is modeled by the perpetuity \(T\) appearing in the proof of Theorem~\ref{thm:sharp-construction}. A uniform Wasserstein estimate quantifies this approximation, while the anti-concentration of the perpetuity controls the discontinuity created by the moving retention boundary. These are the two ingredients needed to turn the qualitative limiting argument into an effective estimate.

For real-valued random variables \(V,W\) with distribution functions \(F_V,F_W\), write \(d_{\mathrm K}(V,W)=\sup_{x\in\mathbb R}|F_V(x)-F_W(x)|\) for the Kolmogorov distance. When \(V\) and \(W\) have finite first moments, write \(d_1(V,W)=\inf \mathbb E|V'-W'|\), where the infimum is taken over all couplings \((V',W')\) with \(V'\overset d=V\) and \(W'\overset d=W\).

\begin{lemma}\label{lem:wasserstein-stability}
Let \(n\) be sufficiently large, and set
\[
\begin{aligned}
A&=A(n),\qquad X=\log A,\qquad Y=\log X,\qquad
R=\left\lceil\frac{X}{\sqrt Y}\right\rceil,\\
\ell_m&=\log q_m,\qquad
Z_m=\prod_{i=R}^{m}\left(1+\frac1{q_i}\right)
\quad(R\le m\le A),\qquad Z_{R-1}=1.
\end{aligned}
\]
For every integer \(k\) with \(R<k\le A\), consider the squarefree products \(S\) of \(q_R,\ldots,q_{k-1}\) under the reciprocal law \(\mathbb P(S=s)=1/(Z_{k-1}s)\), and set \(W_R=0\) and \(W_k=\log S/\ell_k\). Let \(T\) be the perpetuity from the proof of Theorem~\ref{thm:sharp-construction}, satisfying \(T\overset d=U(1+T')\) and \(\mathbb ET=1\), where \(U\) is uniform on \([0,1]\), \(T'\overset d=T\), and \(U\) and \(T'\) are independent.

There is an effectively computable absolute constant \(C>0\) such that, uniformly for \(R<k\le A\),
\[
d_1(W_k,T)
\le
C\left(
Y^{-3/2}
+
\sqrt{\frac{\ell_R}{\ell_k}}
\right).
\]
\end{lemma}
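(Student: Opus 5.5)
We prove the bound by induction on \(k\), using the fact that the reciprocal law reproduces the perpetuity relation \(T\overset d=U(1+T')\) exactly, up to a distortion in the law of the largest selected factor. Fix \(k\) with \(R<k\le A\) and let \(S\) have the reciprocal law on squarefree products of \(q_R,\ldots,q_{k-1}\). Let \(I=i(S)\) be the index of its largest prime factor, with \(I=R-1\) for the empty product. Factoring the Euler product \(Z_{k-1}\) at its last factor gives
\[
\mathbb P(I\le m)=\frac{Z_m}{Z_{k-1}}\qquad(R-1\le m\le k-1).
\]
Conditional on \(I=m\ge R\), we have \(S=q_mS'\), where \(S'\) has the reciprocal law on \(q_R,\ldots,q_{m-1}\). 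Hence, in distribution,
\[
W_k=\rho_k\,(1+W_I'),\qquad \rho_k=\frac{\ell_I}{\ell_k},
\]
where \(W_m'\overset d=W_m\) is independent of \(I\), and the empty product contributes \(W_k=0\) with probability \(1/Z_{k-1}\). This is the discrete analogue of \(T=U(1+T')\).

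\textbf{Step 1: a recursive Wasserstein inequality.} Couple \(\rho_k\) with \(U\) optimally in \(d_1\). Given \(I=m\), couple \(W_m'\) optimally with \(T'\), which is independent of \(U\). Using
\[
|a(1+w)-u(1+t)|\le a|w-t|+|a-u|(1+t)
\]
and \(\mathbb E(1+T')=2\), we obtain
\[
\varepsilon_k\le\mathbb E\bigl[\rho_k\,\varepsilon_I\bigr]+2\,d_1(\rho_k,U)+\frac{C}{Z_{k-1}},\qquad \varepsilon_m:=d_1(W_m,T),
\]
with \(\varepsilon_{R-1}=\varepsilon_R:=d_1(0,T)=1\). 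The last term is harmless, since \(1/Z_{k-1}\ll\ell_R/\ell_k\) by Step 2.

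\textbf{Step 2: the law of \(\rho_k\).} This is the main obstacle, because it must be done effectively and uniformly in \(k\). By Lemma~\ref{lem:prime-totative-location}, \(p_i\le q_i\le p_{i+r}\). Combining this with the effective Rosser--Schoenfeld bounds, as in the proof of Theorem~\ref{thm:sharp-construction} but keeping explicit errors, we aim to show that
\[
\log Z_m=\log\ell_m-\log\ell_R+O(\eta),\qquad \eta\ll Y^{-3/2},
\]
uniformly for \(R\le m\le A\). Three error sources enter. The index shift by \(r\) costs \(\ll r/(R\log R)\). The \(\log\log\) correction in \(p_m\) costs \(\ll Y/\ell_R^2\ll 1/Y\) in the unnormalized form. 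The quadratic terms cost \(\ll 1/(R\log R)\). I expect the \(\log\log\) term to be the one that must be handled with care: if the crude bound only yields \(O(1/Y)\) rather than \(Y^{-3/2}\), I would pass to the prime-indexed Mertens estimate \(\sum_{x<p\le y}1/p=\log\log y-\log\log x+O(1/\log^2x)\) in place of the index bounds. Granting this, for \(x\ge \ell_R/\ell_k\) we get \(\mathbb P(\rho_k\le x)=x\,e^{O(\eta)}\), up to the discreteness of \(\ell_m\). Since \(\ell_{m+1}-\ell_m\ll 1/m\) is negligible against \(\ell_R\), this gives
\[
d_1(\rho_k,U)\le\int_0^1\bigl|\mathbb P(\rho_k\le x)-x\bigr|\,dx\le C_1\Bigl(\eta+\frac{\ell_R}{\ell_k}\Bigr).
\]

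\textbf{Step 3: closing the induction.} We prove \(\varepsilon_k\le K\bigl(\eta+\sqrt{\ell_R/\ell_k}\bigr)\) for a large absolute constant \(K\). When \(\ell_k\le 16\ell_R\), the trivial bound \(\varepsilon_k\le\mathbb EW_k+\mathbb ET\le C_2\) suffices; the bound \(\mathbb EW_k=O(1)\) follows as in the tightness step of Theorem~\ref{thm:sharp-construction}. Otherwise, substituting the inductive hypothesis for \(\varepsilon_I\), with \(I<k\), gives
\[
\mathbb E[\rho_k\varepsilon_I]\le K\,\eta\,\mathbb E\rho_k+K\sqrt{\frac{\ell_R}{\ell_k}}\;\mathbb E\sqrt{\rho_k}.
\]
Step 2 applied to the bounded monotone functions \(x\) and \(\sqrt x\) yields \(\mathbb E\rho_k\le\frac12+C_3(\eta+\ell_R/\ell_k)\) and \(\mathbb E\sqrt{\rho_k}\le\frac23+C_3(\eta+\ell_R/\ell_k)\). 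Both constants are below \(1\) once \(\ell_k\ge16\ell_R\) and \(\eta\) is small. The recursion therefore gives
\[
\varepsilon_k\le K\Bigl(\tfrac34\eta+\tfrac34\sqrt{\tfrac{\ell_R}{\ell_k}}\Bigr)+C_4\Bigl(\eta+\tfrac{\ell_R}{\ell_k}\Bigr),
\]
and \(\ell_R/\ell_k\le\sqrt{\ell_R/\ell_k}\). Choosing \(K\ge\max(4C_4,\,C_2)\) closes the induction, and \(C=K\) times the implied constant in \(\eta\ll Y^{-3/2}\) is effectively computable.
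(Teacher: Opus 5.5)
This is essentially the paper's proof. It uses the same largest-factor recursion $W_k\overset d=(\ell_J/\ell_k)(1+W_J)$ and the same coupling bound with the factor $\mathbb E(1+T)=2$. It also uses the same partial-product estimate $Z_m=(\ell_m/\ell_R)(1+O(Y^{-3/2}))$, which gives $d_1(\ell_J/\ell_k,U)\ll Y^{-3/2}+\ell_R/\ell_k$. Finally, it runs the same induction on $Y^{-3/2}+\sqrt{\ell_R/\ell_k}$, with a trivial $O(1)$ bound when $\ell_R/\ell_k$ is not small.

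Three details need fixing, none structural:
\begin{itemize}
\item Your Mertens fallback in Step~2 is not optional. The index bounds (3.12)--(3.13) only determine $\sum 1/q_i$ to within $\asymp(\log Y)/Y$, which is not $O(Y^{-3/2})$. The paper instead uses Rosser--Schoenfeld's Mertens product and bounds the excluded prime divisors of $n$ by $r/q_R\ll Y^{-3/2}$.
\item Your mesh claim $\ell_{m+1}-\ell_m\ll 1/m$ is false when many prime divisors of $n$ lie between consecutive totatives. However, $q_{m+1}\le p_{m+1+r}$ gives $\ell_{m+1}-\ell_m\ll Y^{-1/2}$. After dividing by $\ell_k\gg Y$, this still yields the required $O(Y^{-3/2})$ mesh.
\item The cutoff $\ell_k\ge16\ell_R$ must be taken large in terms of $C_3$, as the paper does with its $\beta$, so that both $\mathbb E\rho_k$ and $\mathbb E\sqrt{\rho_k}$ are at most $3/4$. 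In the complementary regime you also need $K\ge4C_2$ rather than $K\ge C_2$.
\end{itemize}
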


\begin{proof}
Put \(r=\omega(n)\). The effective form of Lemma~\ref{lem:prime-totative-location} gives \(r\ll_{\mathrm{eff}}X/Y\) and \(r/R\ll_{\mathrm{eff}}Y^{-1/2}\). We first establish the partial-product estimate required uniformly in \(m\). Within this proof, let \(P_+(x)=\prod_{p\le x}(1+1/p)\). Rosser and Schoenfeld prove
\[
\begin{aligned}
e^\gamma\log x
\left(1-\frac1{2\log^2x}\right)
&<
\prod_{p\le x}\frac p{p-1}
&& (x>1),\\
\prod_{p\le x}\frac p{p-1}
&<
e^\gamma\log x
\left(1+\frac1{2\log^2x}\right)
&& (x\ge286),
\end{aligned}
\]
\cite[Theorem 8, p.~70]{RosserSchoenfeld1962}.
Since \(1+1/p=(p/(p-1))(1-1/p^2)\), we obtain, effectively,
\[
\begin{aligned}
\prod_{p\le x}\left(1-\frac1{p^2}\right)
&=
\frac1{\zeta(2)}
\left(1+O_{\mathrm{eff}}(x^{-1})\right),\\
P_+(x)
&=
\frac{e^\gamma}{\zeta(2)}
\log x
\left(1+O_{\mathrm{eff}}(\log^{-2}x)\right).
\end{aligned}
\]

At the lower endpoint we use the exact identity \(P_+(q_R^-)=P_+(q_R)/(1+1/q_R)\). Indeed, \(q_R\) is a prime totative and therefore does not divide \(n\).
\[
Z_m
=
\frac{P_+(q_m)}{P_+(q_R^-)}
\prod_{\substack{p\mid n\\q_R\le p\le q_m}}
\left(1+\frac1p\right)^{-1}.
\]
The omitted prime divisors satisfy \(0\le\sum_{\substack{p\mid n\\p\ge q_R}}1/p\le r/q_R\).
The indexed-prime inequalities
\[
p_m>m\log m\qquad(m\ge1),
\qquad
p_m<m(\log m+\log\log m)\qquad(m\ge6)
\]
are recorded in the corollary to Rosser and Schoenfeld's Theorem 3 as equations (3.12)--(3.13)
\cite[Corollary to Theorem 3, (3.12)--(3.13)]{RosserSchoenfeld1962}.
They give \(q_R\ge p_R>R\log R\gg X\sqrt Y\). Consequently \(r/q_R\ll_{\mathrm{eff}}Y^{-3/2}\). Also \(\ell_R\asymp Y\), so the Mertens-product error at the lower endpoint is \(O_{\mathrm{eff}}(Y^{-2})\). Hence, uniformly for every \(R\le m\le A\),
\begin{equation}\label{eq:partial-product}
Z_m
=
\frac{\ell_m}{\ell_R}
\left(
1+O_{\mathrm{eff}}(Y^{-3/2})
\right).
\end{equation}

For the remainder of the proof, fix an integer \(k\) with \(R<k\le A\). Let \(J=0\) when \(S=1\), and let \(J=i(S)\) otherwise. Define
\[
A_k=
\begin{cases}
0,&J=0,\\[1mm]
\dfrac{\ell_J}{\ell_k},&J\ge R.
\end{cases}
\]
Unique factorization gives
\[
\begin{aligned}
\mathbb P(J=j)
&=
\frac{Z_{j-1}}{q_jZ_{k-1}}
\qquad(R\le j<k),\\
\mathbb P(J=0)
&=Z_{k-1}^{-1}.
\end{aligned}
\]
Indeed, on the event \(J=j\), the factor \(q_j\) is selected and the factors below \(q_j\) form an arbitrary squarefree product of \(q_R,\ldots,q_{j-1}\). Conditionally on \(J=j\), those lower factors therefore retain exactly the reciprocal law defining \(W_j\). Hence
\[
W_k\overset d=
\begin{cases}
0,&J=0,\\[1mm]
\dfrac{\ell_J}{\ell_k}(1+W_J),&J\ge R.
\end{cases}
\]

We next quantify the distribution of \(A_k\). Put \(x_m=\ell_m/\ell_k\) for \(R\le m\le k\), so that \(x_k=1\). At the support points \(R\le m<k\), \eqref{eq:partial-product} gives
\[
\mathbb P(A_k\le x_m)
=
\frac{Z_m}{Z_{k-1}}
=
\frac{\ell_m}{\ell_{k-1}}
\left(
1+O_{\mathrm{eff}}(Y^{-3/2})
\right).
\]

The mesh requires separate control. From \(q_{m+1}\le p_{m+1+r}\) and \(q_m\ge p_m\), we have \(\ell_{m+1}-\ell_m\le\log(p_{m+1+r}/p_m)\). Since \(m\ge R\), one has \(r/m\le r/R\ll_{\mathrm{eff}}Y^{-1/2}\). The indexed-prime estimates therefore yield \(\log(p_{m+1+r}/p_m)\ll_{\mathrm{eff}}Y^{-1/2}+(\log Y)/Y\ll_{\mathrm{eff}}Y^{-1/2}\). As \(\ell_k\ge\ell_R\gg Y\),
\[
\max_{R\le m<k}
\frac{\ell_{m+1}-\ell_m}{\ell_k}
\ll_{\mathrm{eff}}Y^{-3/2}.
\]

At the lower endpoint, \(\mathbb P(A_k=0)=Z_{k-1}^{-1}=O_{\mathrm{eff}}(\ell_R/\ell_k+Y^{-3/2})\). Writing \(b_k=\ell_R/\ell_k\), we therefore have, for \(0<x<x_R\),
\[
|\mathbb P(A_k\le x)-x|
\le \mathbb P(A_k=0)+x_R
=
O_{\mathrm{eff}}(Y^{-3/2}+b_k).
\]

If \(x_m\le x<x_{m+1}\) with \(R\le m<k\), then
\[
\begin{aligned}
\left|\mathbb P(A_k\le x)-x\right|
&\le
\left|
\frac{Z_m}{Z_{k-1}}
-
\frac{\ell_m}{\ell_{k-1}}
\right|\\
&\quad+
\left|
\frac{\ell_m}{\ell_{k-1}}
-
\frac{\ell_m}{\ell_k}
\right|
+
|x-x_m|.
\end{aligned}
\]
The first term is \(O_{\mathrm{eff}}(Y^{-3/2})\), while the second and third are bounded by the preceding mesh estimate. Thus, letting \(U\) be uniform on \([0,1]\), there is an effectively computable absolute constant \(C_1\) such that
\[
d_{\mathrm K}(A_k,U)
\le
C_1\left(Y^{-3/2}+b_k\right).
\]
For probability laws on the line, the Wasserstein coupling distance satisfies
\[
d_1(A_k,U)
=
\int_0^1
\left|
F_{A_k}(x)-F_U(x)
\right|\,dx
\le d_{\mathrm K}(A_k,U).
\]

Put \(e_k=d_1(W_k,T)\). For each \(R\le j<k\), choose a Wasserstein coupling of \(W_j\) with a variable \(T_j\sim T\), with the conditional \(T_j\)-marginal equal to the same law \(T\) for every \(j\). On the branch \(J=0\), choose the corresponding \(T_J\) with the same law \(T\). Thus \(T_J\) is independent of \(J\), and hence of \(A_k\). Couple \(A_k\) with a uniform random variable \(U\) using auxiliary randomness independent of \(T_J\). It follows that \(U(1+T_J)\) has the perpetuity law \(T\). The branchwise recursion and the triangle inequality therefore give
\[
e_k
\le
\sum_{j=R}^{k-1}
\mathbb P(J=j)
\frac{\ell_j}{\ell_k}e_j
+
2d_1(A_k,U),
\]
because \(\mathbb E(1+T)=2\).

Set \(\varepsilon=Y^{-3/2}\). We prove uniformly in \(k\) that
\[
e_k
\le
C\left(
\varepsilon+\sqrt{b_k}
\right)
\]
for an effectively computable absolute constant \(C\). At the base index, \(W_R=0\), \(e_R=d_1(W_R,T)=\mathbb ET=1\), and \(b_R=1\), so the proposed bound holds at \(k=R\) once \(C\ge1\).

For \(k>R\), assume the bound for smaller indices. Since \((\ell_j/\ell_k)\sqrt{b_j}=\sqrt{b_k}\sqrt{\ell_j/\ell_k}\), we obtain
\[
e_k
\le
C\varepsilon\,\mathbb EA_k
+
C\sqrt{b_k}\,\mathbb E\sqrt{A_k}
+
2C_1(\varepsilon+b_k).
\]
The preceding Kolmogorov estimate, applied to the bounded-variation functions \(x\) and \(\sqrt x\), yields
\[
\mathbb EA_k
=
\frac12+O_{\mathrm{eff}}(\varepsilon+b_k),
\qquad
\mathbb E\sqrt{A_k}
=
\frac23+O_{\mathrm{eff}}(\varepsilon+b_k).
\]
Choose a fixed \(\beta>0\) sufficiently small that, whenever \(\varepsilon+b_k\le\beta\), both expectations are at most \(3/4\). After choosing, for example, \(C\ge8C_1\), the induction closes in this regime.

The claimed bound also holds uniformly in the complementary regime. Once \(Y\) is effectively large enough that \(\varepsilon\le\beta/2\), the condition \(\varepsilon+b_k>\beta\) implies \(b_k\ge\beta/2\). Moreover,
\[
\mathbb EW_k
=
\frac1{\ell_k}
\sum_{i=R}^{k-1}
\frac{\ell_i}{q_i+1}
<
\frac1{\ell_k}
\sum_{p\le q_{k-1}}
\frac{\log p}{p}
<1,
\]
where the last inequality is Rosser and Schoenfeld's equation (3.24)
\cite[(3.24), p.~70]{RosserSchoenfeld1962}.
Since \(\mathbb ET=1\), we have \(e_k\le\mathbb EW_k+\mathbb ET<2\). Enlarging \(C\) so that \(2\le C\sqrt{\beta/2}\) establishes the claimed bound throughout the complementary regime. This proves the lemma.
\end{proof}

The perpetuity representation also supplies the anti-concentration needed below. Conditional on \(T'=t\), \(T=U(1+t)\) is uniform on \([0,1+t]\). Hence \(T\) has density
\[
f_T(x)
=
\mathbb E\left[
\frac{\mathbf1_{\{0\le x\le1+T'\}}}{1+T'}
\right],
\qquad
\|f_T\|_\infty\le1.
\]
We shall use the following one-jump consequence. Suppose \(d_1(W,T)\le e\), let \(a\ge0\), and define
\[
f(w)
=
\begin{cases}
\dfrac1{c-aw},&w\le b,\\[2mm]
0,&w>b,
\end{cases}
\]
where \(c-aw\ge\delta>0\) on the support of \(f\). Away from the jump, \(|f'(w)|=a/(c-aw)^2\le a/\delta^2\). Choose a coupling with \(\mathbb E|W-T|\le e\) and put \(h=\sqrt e\). The event on which the jump can intervene is contained in \(\{|W-T|>h\}\cup\{|T-b|\le h\}\). By Markov's inequality and the density bound above, its probability is at most \(3\sqrt e\). On the complement, \(W\) and \(T\) lie on the same side of the jump. Consequently
\begin{equation}\label{eq:one-jump-smoothing}
|\mathbb Ef(W)-\mathbb Ef(T)|
\le
\frac{3\sqrt e}{\delta}
+
\frac{ae}{\delta^2}.
\end{equation}

\begin{theorem}\label{thm:effective-construction}
There exist absolute effectively computable constants \(C_0,A_0>0\) such that, whenever \(A(n)\ge A_0\),
\[
\frac{|\mathcal C_n|}{A(n)}
\ge
\left(
e^{-\gamma}
-
C_0(\log\log A(n))^{-1/4}
\right)
\frac{\log A(n)}{\log\log A(n)}.
\]
\end{theorem}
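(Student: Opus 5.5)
The plan is to redo the counting of Theorem~\ref{thm:sharp-construction} for the actual family \(\mathcal C_n\), with retention parameter \(\delta_A=Y^{-1/4}\), but now with every error made explicit. Write \(X=\log A\), \(Y=\log X\), \(\kappa=1+r/R\), and \(D_s=\log(A/s)+3Y\).

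\textbf{Step 1: exact counting identity.} By injectivity (Proposition~\ref{prop:construction-validity}) and \(\lfloor x\rfloor\ge x-1\),
\[
\frac{|\mathcal C_n|}{A}\ge\frac{X}{\kappa}\sum_{\text{retained }s}\frac1{sD_s}-\frac1A\sum_{\text{retained }s}(i(s)+1).
\]
\begin{itemize}
\item Retention gives \(i(s)\le A^{1-\delta_A}/s\). So the subtracted term is at most \(A^{-\delta_A}Z_A+A^{-\delta_A}\), and \(A^{-\delta_A}=\exp(-XY^{-1/4})\) is negligible against \(Y^{-1/4}X/Y\).
\item Effectively, \(\kappa=1+O(Y^{-1/2})\).
\item By \eqref{eq:partial-product}, \(Z_A=(\ell_A/\ell_R)(1+O(Y^{-3/2}))\). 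Since \(\ell_A=X+O(Y)\) and \(\ell_R=Y+O(\log Y)\), this gives \(Z_A=(X/Y)(1+O(\log Y/Y))\).
\end{itemize}
It therefore suffices to show that
\[
\frac{1}{Z_A}\sum_{\text{retained }s}\frac{X}{sD_s}\ge e^{-\gamma}-O(Y^{-1/4}).
\]

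\textbf{Step 2: decompose by the largest index.} Split the sum by the value \(j=i(s)\) and write \(s=q_js'\), where \(s'\) is a squarefree product of \(q_R,\dots,q_{j-1}\). Then
\[
\frac1{Z_A}\sum_{i(s)=j}\frac{1}{s}(\cdots)=\mathbb P(J=j)\,\mathbb E[\,\cdot\,],
\]
with \(\mathbb P(J=j)=Z_{j-1}/(q_jZ_A)\), and the inner expectation is taken over \(W_j=\log s'/\ell_j\) under its reciprocal law. In these variables:
\begin{itemize}
\item \(\log s=\ell_j(1+W_j)\);
\item the retention condition reads \(\ell_j(1+W_j)+\log j\le(1-\delta_A)X\);
\item the weight is \(X/D_s=X/(X+3Y-\ell_j(1+W_j))\).
\end{itemize}
Replacing \(\log j\) by \(\ell_j\) costs \(O(\log X)=O(Y)\), hence \(O(Y/X)\) after normalization. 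For fixed \(j\), the integrand is then exactly the one-jump function \(f(w)=1/(c-aw)\) truncated at \(w\le b\), with
\[
a=\ell_j/X,\qquad c=1+3Y/X-\ell_j/X,\qquad b=\bigl((1-\delta_A)X-\log j\bigr)/\ell_j-1,
\]
and \(c-aw\ge\delta_A\) on the support.

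\textbf{Step 3: apply Lemma~\ref{lem:wasserstein-stability} and \eqref{eq:one-jump-smoothing}.} These give, for each \(j\), an error of
\[
3\sqrt{e_j}/\delta_A+a e_j/\delta_A^2,\qquad e_j\ll Y^{-3/2}+\sqrt{\ell_R/\ell_j}.
\]
The bound is good only once \(\ell_j\) is large compared with \(\ell_R\), so I would split at \(\ell_j\ge Y^{K}\) for a suitable fixed power \(K\).
\begin{itemize}
\item \textbf{Small \(j\).} Here \(\mathbb P(\ell_J\le Y^{K})\approx Y^{K}/X\), which is tiny, and the integrand is at most \(1/\delta_A\). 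So this range can be discarded.
\item \textbf{Large \(j\).} Here \(e_j\ll Y^{-3/2}+Y^{(1-K)/2}\). Then \(\sqrt{e_j}/\delta_A\ll Y^{-3/4+1/4}=Y^{-1/2}\), which is admissible. Note the \(\sqrt{e}\) loss: with \(e\asymp Y^{-3/2}\), \(\sqrt e=Y^{-3/4}\), and dividing by \(\delta_A=Y^{-1/4}\) gives \(Y^{-1/2}\). Likewise \(ae_j/\delta_A^2\ll Y^{-3/2+1/2}=Y^{-1}\).
\end{itemize}

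\textbf{Step 4: assemble the main term.} With \(W_j\) replaced by \(T\), summing over \(j\) reproduces
\[
\mathbb E\!\left[\frac{\mathbf 1_{\{U(T+2)\le1-\delta_A\}}}{1-U(1+T)}\right]
\]
with the limiting variable \(U\) replaced by \(A_J=\ell_J/\ell_A\). Using the Kolmogorov bound \(d_{\mathrm K}(A_k,U)\ll Y^{-3/2}+\ell_R/\ell_A\) (proved inside Lemma~\ref{lem:wasserstein-stability}), I would replace the sum over \(j\) by the integral over \(U\). After conditioning on \(T\), the integrand in \(u\) is monotone and bounded by \(1/\delta_A\), so Stieltjes integration by parts gives an error of \(O((Y^{-3/2}+Y/X)/\delta_A)\).

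This yields \(c_{\delta_A}+O(Y^{-1/4})\). Finally, \(c_{\delta_A}\ge e^{-\gamma}-\delta_A=e^{-\gamma}-Y^{-1/4}\), as shown at the end of the proof of Theorem~\ref{thm:sharp-construction}. Collecting the errors gives the dominant loss \(Y^{-1/4}\) from the retention margin itself, which produces \(C_0\).

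\textbf{Main obstacle.} I expect the hardest step to be the uniform bookkeeping in Steps 2 and 3. Three things must be controlled there:
\begin{itemize}
\item the distortions \(\log j\) versus \(\ell_j\) and \(\ell_A\) versus \(X\) must shift the moving jump point \(b\) by only \(O(Y/X)\) relative to \(\delta_A\);
\item the conditional weight \(X/D_s\), which blows up like \(1/\delta_A\) near the boundary, must be integrated against errors that are only Kolmogorov or Wasserstein, not pointwise;
\item the small-\(j\) regime, where Lemma~\ref{lem:wasserstein-stability} is useless, must be carved off effectively.
\end{itemize}
My first attempt would be the crude bound \(1/\delta_A\) everywhere together with the effective partial-product estimate \eqref{eq:partial-product}. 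I would check that each resulting loss is \(O(Y^{-1/4})\).
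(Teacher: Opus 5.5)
Your proposal is correct and is essentially the paper's proof. It uses the same ingredients: the deterministic counting inequality normalized by $Z_A/\kappa$; decomposition by the largest selected index, with Lemma~\ref{lem:wasserstein-stability} and \eqref{eq:one-jump-smoothing} on a good sector (the $Y^{-1/2}$ loss); the trivial payoff bound $1/\delta_A$ on the small-index sector; a Kolmogorov--Stieltjes averaging of the largest-factor variable against the uniform law; and finally $c_{\delta_A}\ge e^{-\gamma}-\delta_A$, which is the dominant $Y^{-1/4}$ loss.

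The differences from the paper are cosmetic. You cut at $\ell_j\ge Y^K$ (any fixed $K\ge4$) rather than the paper's $j>A^{Y^{-2}}$, and you average in $\ell_J/\ell_A$ rather than $\log J/X$. Averaging on the $\ell$-scale is what makes your weaker cutoff suffice: the jump then moves by only $O(Y/\ell_j)$ in the $w$-variable, instead of the $O\bigl((Y/X)(X/\log j)^2\bigr)$ shift incurred by comparing with $\log j/X$. One small correction: the $u$-integrand increases up to a single jump rather than being monotone, so the Stieltjes step should use total variation at most $2/\delta_A$.
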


\begin{proof}
Write \(A=A(n)\), \(r=\omega(n)\), \(X=\log A\), and \(Y=\log X\). We use exactly the parameters defining \(\mathcal C_n\) in Subsection~\ref{subsec:sharp-construction}: \(R=\lceil X/\sqrt Y\rceil\) and \(\delta_A=\min\{1/2,Y^{-1/4}\}\). After increasing the eventual threshold, we may assume throughout that \(\delta_A=Y^{-1/4}\). The effective form of Lemma~\ref{lem:prime-totative-location} gives \(r\ll_{\mathrm{eff}}X/Y\) and \(r/R\ll_{\mathrm{eff}}Y^{-1/2}\).

Put \(\ell_m=\log q_m\) and \(Z_m=\prod_{i=R}^{m}(1+1/q_i)\) for \(R\le m\le A\), with \(Z_{R-1}=1\). The partial-product estimate \eqref{eq:partial-product} holds uniformly for \(R\le m\le A\).

Put \(\delta=\delta_A=Y^{-1/4}\), and introduce the auxiliary parameter \(\eta=Y^{-2}\). Under the reciprocal probability law on all squarefree products \(S\) of \(q_R,\ldots,q_A\), \(\mathbb P(S=s)=1/(Z_As)\); retain the convention \(J=0\) for the empty product and \(J=i(S)\) otherwise.

Consider first the good conditional sector \(k>\lfloor A^\eta\rfloor\). Then \(k>A^\eta\) and \(\log k>\eta X\), so \(\ell_k\ge\log k>X/Y^2\). Since \(\ell_R\asymp Y\), Lemma~\ref{lem:wasserstein-stability} gives \(d_1(W_k,T)=O_{\mathrm{eff}}(Y^{-3/2})\) uniformly throughout this sector.

For such \(k\), put \(u_k=\log k/X\) and \(a_k=\ell_k/X\), and define the conditional payoff branchwise by
\[
f_{A,k}(w)
=
\begin{cases}
\dfrac1{1-a_k(1+w)+3Y/X},
&
a_k(1+w)+u_k\le1-\delta,\\[3mm]
0,
&
a_k(1+w)+u_k>1-\delta.
\end{cases}
\]
On its support the denominator is at least \(\delta\). Since \(k\le A\), the indexed-prime bounds give \(a_k\le2\) after an effective threshold. Applying \eqref{eq:one-jump-smoothing} with \(e=O_{\mathrm{eff}}(Y^{-3/2})\) therefore gives
\[
\left|
\mathbb E f_{A,k}(W_k)
-
\mathbb E f_{A,k}(T)
\right|
=
O_{\mathrm{eff}}(Y^{-1/2})
\]
uniformly in the good sector.

We next compare the prime-totative scale with the index scale. The indexed-prime bounds give \(|a_k-u_k|=O_{\mathrm{eff}}(Y/X)\) uniformly for \(k>\lfloor A^\eta\rfloor\), while \(a_k\ge u_k>\eta\). The corresponding displacement of the cutoff in the variable \(w\) is \(O_{\mathrm{eff}}((Y/X)/\eta^2)=O_{\mathrm{eff}}(Y^5/X)\). Using the density bound for \(T\), the lower bound \(\delta\) for the denominator, and elementary differentiation on the common support gives a total parameter-perturbation error \(O_{\mathrm{eff}}(Y^{21/4}/X)\).

Define
\[
h_\delta(u)
=
\mathbb E\left[
\begin{cases}
\dfrac1{1-u(T+1)},
&
u(T+2)\le1-\delta,\\[3mm]
0,
&
u(T+2)>1-\delta.
\end{cases}
\right].
\]
The preceding perturbation calculation is precisely
\[
\left|
\mathbb E f_{A,k}(T)
-
h_\delta(u_k)
\right|
=
O_{\mathrm{eff}}\left(\frac{Y^{21/4}}{X}\right)
\]
uniformly in the good sector.

The complementary small-largest-factor sector is \(J\le\lfloor A^\eta\rfloor\). For sufficiently large \(A\), \(A^\eta>R\), so \eqref{eq:partial-product} applies at \(m=\lfloor A^\eta\rfloor\), and
\[
\mathbb P\!\left(J\le\lfloor A^\eta\rfloor\right)
=
\frac{Z_{\lfloor A^\eta\rfloor}}{Z_A}
=
O_{\mathrm{eff}}\left(\eta+Y^{-3/2}\right)
=
O_{\mathrm{eff}}(Y^{-3/2}).
\]
The finite payoff is bounded by \(\delta^{-1}=Y^{1/4}\), so this entire sector contributes \(O_{\mathrm{eff}}(Y^{-5/4})\).

It remains to average the good-sector approximation over the largest selected factor. Assign the value \(0\) to the empty product and the value \((\log J)/X\) to a nonempty product. For \(R\le m\le A\), \eqref{eq:partial-product} gives
\[
\mathbb P(J\le m)
=
\frac{Z_m}{Z_A}
=
\frac{\ell_m}{\ell_A}
+
O_{\mathrm{eff}}(Y^{-3/2}).
\]
Uniformly in this range, the indexed-prime bounds give \(\ell_m=\log m+O_{\mathrm{eff}}(Y)\) and \(\ell_A=X+Y+O_{\mathrm{eff}}(1)\), whence \(\sup_{R\le m\le A}|\ell_m/\ell_A-(\log m)/X|\ll_{\mathrm{eff}}Y/X\). Below \(R\), both the uniform mass \((\log R)/X\) and the empty-product mass \(Z_A^{-1}\) are \(O_{\mathrm{eff}}(Y/X)\), and the mesh of the points \((\log m)/X\) is smaller still. Thus the resulting distribution on \([0,1]\) has Kolmogorov distance \(O_{\mathrm{eff}}(Y^{-3/2})\) from the uniform law.

For fixed \(T=t\), the branchwise integrand defining \(h_\delta(u)\) increases up to a single cutoff and then drops to zero. Its total variation is at most \(2/\delta\), and averaging in \(t\) preserves the bound \(\operatorname{TV}(h_\delta)\le2/\delta\). The one-dimensional Stieltjes inequality consequently gives
\[
\left|
\mathbb E h_\delta\!\left(\frac{\log J}{X}\right)
-
\int_0^1h_\delta(u)\,du
\right|
=
O_{\mathrm{eff}}(Y^{-5/4}),
\]
where the argument of \(h_\delta\) is understood to be \(0\) on the empty branch.

The computation in the proof of Theorem~\ref{thm:sharp-construction} identifies
\[
\int_0^1 h_\delta(u)\,du=c_\delta,
\qquad
0\le e^{-\gamma}-c_\delta\le\delta.
\]
Combining the good-sector smoothing estimate, the scale perturbation, the small-sector estimate, and the averaging estimate, with \(\delta=Y^{-1/4}\), yields
\[
\mathbb E\left[
\frac{
\mathbf1_{\{SJ\le A^{1-\delta}\}}
}{
1-\log S/X+3Y/X
}
\right]
\ge
e^{-\gamma}
-
Y^{-1/4}
-
O_{\mathrm{eff}}(Y^{-1/2}).
\]
Here and below this payoff is defined to be zero off the indicated event, so the denominator is evaluated only on its support. On the empty branch \(J=0\), the event holds and the displayed payoff equals \((1+3Y/X)^{-1}\).

We return to the deterministic count for \(\mathcal C_n\), setting \(\kappa=1+r/R\) and \(D_s=\log(A/s)+3Y\). By \eqref{eq:partial-product} and the indexed-prime estimates \(\ell_A=X+Y+O_{\mathrm{eff}}(1)\) and \(\ell_R=Y+\tfrac12\log Y+O_{\mathrm{eff}}(1)\),
\[
Z_A
=
\frac XY
\left(
1+
O_{\mathrm{eff}}\left(\frac{\log Y}{Y}\right)
\right),
\qquad
\kappa=1+O_{\mathrm{eff}}(Y^{-1/2}).
\]

The reciprocal probability space contains the empty product, whereas the family \(\mathcal C_n\) uses only nonempty multipliers. The exact conversion is therefore
\[
\begin{aligned}
\frac X\kappa
\sum_{\substack{
s\ne1\\
s\,i(s)\le A^{1-\delta}
}}
\frac1{sD_s}
&=
\frac{Z_A}{\kappa}
\mathbb E\left[
\frac{
\mathbf1_{\{SJ\le A^{1-\delta}\}}
}{
1-\log S/X+3Y/X
}
\right]\\
&\quad-
\frac{X}{\kappa(X+3Y)}.
\end{aligned}
\]
The final term therefore subtracts the empty-product contribution exactly once.

The endpoints \(J_s\) give the deterministic counting inequality
\[
\frac{|\mathcal C_n|}{A}
\ge
\frac X\kappa
\sum_{\substack{
s\ne1\\
s\,i(s)\le A^{1-\delta}
}}
\frac1{sD_s}
-
\frac1A
\sum_{\substack{
s\ne1\\
s\,i(s)\le A^{1-\delta}
}}
(i(s)+1).
\]
For the deleted lower indices, retention gives
\[
\frac1A
\sum_{\substack{
s\ne1\\
s\,i(s)\le A^{1-\delta}
}}
i(s)
\le
A^{-\delta}Z_A.
\]
The number of retained multipliers divided by \(A\) is at most \(A^{-\delta}/R\). Since \(A^{-\delta}=\exp(-X/Y^{1/4})\), both errors are smaller than every fixed negative power of \(Y\) after an effective threshold. The empty-product term is \(O(1)\), hence has normalized size \(O(Y/X)\) relative to the scale \(X/Y\). Proposition~\ref{prop:construction-validity} supplies the product-size, coprimality, compositeness, endpoint, and injectivity properties for precisely these same retained multipliers.

Combining the preceding estimates gives
\[
\frac{|\mathcal C_n|}{A}
\ge
\left[
e^{-\gamma}
-
Y^{-1/4}
-
O_{\mathrm{eff}}(Y^{-1/2})
-
O_{\mathrm{eff}}\left(\frac{\log Y}{Y}\right)
\right]
\frac XY.
\]
The additional terms \(Y^{21/4}/X\), \(\exp(-X/Y^{1/4})\), and \(Y/X\) are all effectively subordinate to \(Y^{-1/4}\). Hence there exist absolute effectively computable constants \(C_0,A_0>0\) such that
\[
\frac{|\mathcal C_n|}{A(n)}
\ge
\left(
e^{-\gamma}
-
C_0(\log\log A(n))^{-1/4}
\right)
\frac{\log A(n)}{\log\log A(n)}
\]
whenever \(A(n)\ge A_0\). The constants \(C_0\) and \(A_0\) can be obtained through a finite explicit procedure from the explicit constants and thresholds occurring in the proof; no numerical values for them are asserted.
\end{proof}

For the full ratios, the corresponding scale and extremal constants are classical.

\begin{proposition}\label{prop:classical-optimality}
One has
\[
\begin{aligned}
\liminf_{n\to\infty}
\frac{\phi(n)}{\pi(n)}
\frac{\log\log n}{\log n}
&=e^{-\gamma},\\
\liminf_{n\to\infty}
\frac{B(n)}{A(n)}
\frac{\log\log n}{\log n}
&=e^{-\gamma},\\
\limsup_{n\to\infty}
\frac1{\log n}\frac{\phi(n)}{\pi(n)}
&=1,\\
\limsup_{n\to\infty}
\frac1{\log n}\frac{B(n)}{A(n)}
&=1.
\end{aligned}
\]
Moreover, uniformly as \(n\to\infty\),
\[
\frac{B(n)}{A(n)}
=
\frac{\phi(n)}{\pi(n)}
\left(
1+
O\left(
\frac{\log\log n}{\log n}
\right)
\right).
\]
\end{proposition}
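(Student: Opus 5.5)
The plan is to reduce everything to classical facts about \(\phi(n)\) and \(\pi(n)\), and then transfer them to \(B(n)/A(n)\) through \eqref{eq:ratio-decomposition}. The classical inputs are three:
\begin{itemize}
\item Landau's minimal order, \(\liminf_{n\to\infty}\phi(n)\log\log n/n=e^{-\gamma}\) \cite[(27.2.3)]{DLMF}, attained along the primorials;
\item the trivial maximal order \(\phi(n)\le n-1\), with equality at primes, so that \(\limsup\phi(n)/n=1\);
\item the prime number theorem, \(\pi(n)=(1+o(1))n/\log n\).
\end{itemize}

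Multiplying, \(\frac{\phi(n)}{\pi(n)}\frac{\log\log n}{\log n}=\frac{\phi(n)\log\log n}{n}(1+o(1))\), which gives the first liminf. It is also attained along primorials, where \(\pi(n)\sim n/\log n\) still holds. Similarly, \(\frac{1}{\log n}\frac{\phi(n)}{\pi(n)}=\frac{\phi(n)}{n}(1+o(1))\) is at most \(1+o(1)\), and tends to \(1\) along primes. This gives the first limsup.

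Next I prove the uniform comparison, which delivers the remaining two statements. From \eqref{eq:ratio-decomposition},
\[
\frac{B(n)}{A(n)}=\frac{\phi(n)}{\pi(n)}\Bigl(1+\frac{\omega(n)}{A(n)}\Bigr)-1.
\]
So it suffices to show that \(\omega(n)/A(n)\) and \(A(n)\pi(n)^{-1}\cdot\pi(n)/\phi(n)=1/(\text{ratio})\) are both \(O(\log\log n/\log n)\) relative to the main term. For the first, I use \(\omega(n)\ll\log n/\log\log n\), which follows from \(n\ge(\omega(n)+1)!\) as in Lemma~\ref{lem:denominator}. Together with \(A(n)=\pi(n)-\omega(n)\sim n/\log n\), this gives \(\omega(n)/A(n)\ll(\log n)^2/(n\log\log n)\), which is far smaller than \(\log\log n/\log n\). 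For the second, the additive \(-1\) has to be absorbed multiplicatively: by the first liminf, \(\phi(n)/\pi(n)\gg\log n/\log\log n\), so \(1=\frac{\phi(n)}{\pi(n)}\cdot O(\log\log n/\log n)\). This is exactly where the stated error term comes from. Combining the two,
\[
\frac{B(n)}{A(n)}=\frac{\phi(n)}{\pi(n)}\Bigl(1+O\Bigl(\frac{\log\log n}{\log n}\Bigr)\Bigr)
\]
uniformly. Since the error factor tends to \(1\), the liminf and limsup for \(B/A\) follow immediately from those for \(\phi/\pi\).

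The only delicate point is the uniformity of the lower bound \(\phi(n)/\pi(n)\gg\log n/\log\log n\) for all \(n\), not just along a subsequence. I would take it from the effective Rosser--Schoenfeld bound \(\phi(n)>n/(e^\gamma\log\log n+3/\log\log n)\) for \(n\ge3\), combined with \(\pi(n)<1.26\,n/\log n\). Both are explicit, so the implied constant is absolute for \(n\ge3\).
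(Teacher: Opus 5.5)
Your proposal is correct and is essentially the paper's argument. Both combine Landau's minimal order, the prime number theorem and $\phi(n)\le n$, and both get the comparison from $\pi(n)/\phi(n)=O(\log\log n/\log n)$ together with the negligibility of $\omega(n)$; the paper writes this as $A=\pi\,(1+O((\log n)^2/n))$ and $B=\phi\,(1+O(\log\log n/\log n))$, where you go through \eqref{eq:ratio-decomposition}. The uniformity you call delicate is in fact automatic, because a positive $\liminf$ already gives $\phi(n)/\pi(n)\gg\log n/\log\log n$ for all large $n$, which is all the $O$-statement needs. Also, the paper cites DLMF (27.2.3) for the prime number theorem and takes Landau's theorem from Lagarias, so your attribution of Landau's theorem to DLMF (27.2.3) should be corrected.
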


\begin{proof}
The prime number theorem gives \(\pi(n)\sim n/\log n\) \cite[(27.2.3)]{DLMF}. Since \(2^{\omega(n)}\le n\), we have \(\omega(n)=O(\log n)\), and hence \(\omega(n)/\pi(n)=O((\log n)^2/n)\). Therefore
\begin{equation}\label{eq:A-asymptotic}
A(n)
=
\pi(n)
\left(
1+
O\left(\frac{(\log n)^2}{n}\right)
\right)
\sim
\frac n{\log n}.
\end{equation}

Landau's minimal-order theorem states
\[
\liminf_{n\to\infty}
\frac{\phi(n)\log\log n}{n}
=
e^{-\gamma}
\]
\cite[Theorem 3.4.2]{LagariasEulerConstant}.
In particular, \(n/\phi(n)=O(\log\log n)\). Together with \eqref{eq:A-asymptotic}, this gives \(A(n)/\phi(n)=O(\log\log n/\log n)\), and hence, since \(B(n)=\phi(n)-A(n)\), one has \(B(n)=\phi(n)(1+O(\log\log n/\log n))\). Combining this with \eqref{eq:A-asymptotic} gives
\begin{equation}\label{eq:BA-vs-phi-pi}
\frac{B(n)}{A(n)}
=
\frac{\phi(n)}{\pi(n)}
\left(
1+
O\left(\frac{\log\log n}{\log n}\right)
\right).
\end{equation}

Now
\[
\frac{\phi(n)}{\pi(n)}
\frac{\log\log n}{\log n}
=
\left(
\frac{\phi(n)\log\log n}{n}
\right)
\left(
\frac{n}{\pi(n)\log n}
\right),
\]
and the second factor tends to \(1\). Landau's theorem therefore gives the first liminf assertion, and the second follows from \eqref{eq:BA-vs-phi-pi}.

For the upper envelope, \(\phi(n)\le n\) and the prime number theorem give
\[
\limsup_{n\to\infty}
\frac1{\log n}\frac{\phi(n)}{\pi(n)}
\le1.
\]
Along the primes \(p\), one has \(\phi(p)=p-1\), so the prime number theorem gives the reverse inequality. Hence the displayed limsup equals \(1\), and the corresponding assertion for \(B(n)/A(n)\) follows from \eqref{eq:BA-vs-phi-pi}.
\end{proof}

Equation \eqref{eq:A-asymptotic} also gives
\(\log A(n)/\log\log A(n)\sim\log n/\log\log n\).
Hence Theorem~\ref{thm:sharp-construction} may equivalently be read on the classical \(n\)-scale as
\[
\frac{|\mathcal C_n|}{A(n)}
\ge
\left(e^{-\gamma}-o(1)\right)
\frac{\log n}{\log\log n}.
\]
Consequently \(e^{-\gamma}\) is the largest possible leading constant in the following uniform comparison class. Suppose that, for all sufficiently large \(n\), a family \(\mathcal D_n\) consists of integers counted by \(B(n)\) and satisfies
\[
\frac{|\mathcal D_n|}{A(n)}
\ge
(c-o(1))
\frac{\log A(n)}{\log\log A(n)}.
\]
Since \(|\mathcal D_n|\le B(n)\), Proposition~\ref{prop:classical-optimality} together with the preceding asymptotic forces \(c\le e^{-\gamma}\). Thus the constant \(e^{-\gamma}\) and the corresponding extremal scale arise from the classical theorems of Landau and the prime number theorem. Theorem~\ref{thm:sharp-construction} realizes this classical ceiling constructively through the explicitly defined family \(\mathcal C_n\), and Theorem~\ref{thm:effective-construction} gives an effective rate for that same family.

\section{Exact Thresholds}\label{sec:thresholds}

Having established the sharp constructive lower bound, its quantitative refinement, and the classical comparison with the full ratios, we turn to a different question: the least eventual thresholds for prescribed strict dominance inequalities. The argument combines a general relation between the two threshold sequences with primorial extremality, support reduction, and certified finite computation.

\subsection{Structural and Analytic Reductions}\label{subsec:threshold-reductions}

For each integer \(k\ge1\), let \(N_k\) denote the least integer \(N\) such that \(\phi(n)>k\pi(n)\) for every \(n\ge N\), and let \(M_k\) denote the least integer \(M\) such that \(B(n)>kA(n)\) for every \(n\ge M\). Since these inequalities are strict, equality is counted as a failure.

\begin{proposition}\label{prop:threshold-relation}
For every integer \(k\ge1\), one has \(M_k\le N_{k+1}\).
\end{proposition}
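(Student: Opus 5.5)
The plan is to prove the pointwise implication: for every \(n\ge N_{k+1}\), the inequality \(\phi(n)>(k+1)\pi(n)\) forces \(B(n)>kA(n)\). Once we have this, the definition of \(M_k\) as the least integer beyond which \(B(n)>kA(n)\) always holds gives \(M_k\le N_{k+1}\) immediately. We must also check that \(N_{k+1}\) is finite, so that the comparison makes sense. This follows from Corollary~\ref{cor:ratio-consequences}, since \(\phi(n)/\pi(n)\to\infty\). Alternatively, finiteness of \(M_k\) follows directly from \(B(n)/A(n)\to\infty\).

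The key step is the identity
\[
B(n)-kA(n)=\phi(n)-(k+1)\pi(n)+(k+1)\omega(n).
\]
It comes from \(B(n)=\phi(n)-A(n)\) and \(A(n)=\pi(n)-\omega(n)\). Indeed, \(B-kA=\phi-(k+1)A=\phi-(k+1)\pi+(k+1)\omega\). Since \(\omega(n)\ge0\), whenever \(\phi(n)-(k+1)\pi(n)>0\) we get \(B(n)-kA(n)\ge\phi(n)-(k+1)\pi(n)>0\). Hence every \(n\ge N_{k+1}\) satisfies \(B(n)>kA(n)\). The bound then follows because \(M_k\) is the least \(M\) such that \(B(n)>kA(n)\) holds for all \(n\ge M\), and \(N_{k+1}\) is one such \(M\).

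There is essentially no obstacle here. The only point needing care is the convention for small \(n\). The identities \(\phi=A+B\) and \(\pi=A+\omega\) hold for all \(n\ge2\), and since \(N_{k+1}\ge2\) for every \(k\ge1\), all \(n\) in the relevant range are covered. As a check, \(\phi(2)=1<(k+1)\pi(2)\), so indeed \(N_{k+1}\ge2\). Strictness is preserved, since a strict inequality plus a nonnegative term remains strict. This matches the convention that equality counts as a failure.
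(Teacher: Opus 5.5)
Your proof is correct and is essentially the paper's argument: the paper uses $A(n)\le\pi(n)$ to get $\phi(n)>(k+1)A(n)$, hence $B(n)=\phi(n)-A(n)>kA(n)$ for every $n\ge N_{k+1}$, which is your defect identity with the nonnegative term $(k+1)\omega(n)$ dropped. Your remark that $N_{k+1}$ is finite (via Corollary~\ref{cor:ratio-consequences}) is a harmless addition that the paper leaves implicit.
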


\begin{proof}
If \(n\ge N_{k+1}\), then \(\phi(n)>(k+1)\pi(n)\). Since \(A(n)=\pi(n)-\omega(n)\le\pi(n)\), it follows that \(\phi(n)>(k+1)A(n)\), and therefore \(B(n)=\phi(n)-A(n)>kA(n)\). Thus every \(n\ge N_{k+1}\) satisfies the defining inequality for \(M_k\), so \(M_k\le N_{k+1}\).
\end{proof}

The threshold determination rests on two elementary reductions. The first controls \(n/\phi(n)\) by the extremal prime support of a given size; the second reduces a bounded range with fixed support size to finitely many radicals and admissible multipliers.

\begin{lemma}\label{lem:primorial-extremality}
Let \(n\ge2\) and \(r=\omega(n)\). Then
\[
\frac{n}{\phi(n)}
\le
\prod_{i=1}^{r}\frac{p_i}{p_i-1}.
\]
In particular, if \(j\ge1\) and \(p_j^\#\le n<p_{j+1}^\#\), then \(r\le j\) and
\[
\frac{n}{\phi(n)}
\le
\prod_{i=1}^{j}\frac{p_i}{p_i-1}.
\]
The latter bound is sharp on this primorial interval, with equality at \(n=p_j^\#\).
\end{lemma}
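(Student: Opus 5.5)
The plan is to start from the product formula \(n/\phi(n)=\prod_{p\mid n}p/(p-1)\) and compare the actual prime support with the first \(r\) primes. Write the distinct prime divisors of \(n\) as \(P_1<P_2<\cdots<P_r\). Since these are \(r\) distinct primes in increasing order, \(P_i\ge p_i\) for each \(i\); this is the same observation as \(p_i\le q_i\) in Lemma~\ref{lem:prime-totative-location}. The function \(x\mapsto x/(x-1)=1+1/(x-1)\) is decreasing on \((1,\infty)\), so \(P_i/(P_i-1)\le p_i/(p_i-1)\) factor by factor. Multiplying these \(r\) inequalities gives the first assertion.

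For the primorial-interval statement, I would argue as in the proof of Lemma~\ref{lem:denominator}. Because \(n\) is divisible by \(r\) distinct primes, \(n\ge\prod_i P_i\ge p_r^\#\). If \(r\ge j+1\), then \(n\ge p_{j+1}^\#\), which contradicts \(n<p_{j+1}^\#\). Hence \(r\le j\). Every factor \(p_i/(p_i-1)\) exceeds \(1\), so the product over \(i\le r\) is at most the product over \(i\le j\), which yields the second bound.

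Sharpness is a direct check: for \(n=p_j^\#\) the prime support is exactly \(\{p_1,\dots,p_j\}\), so \(n/\phi(n)\) equals the product. Moreover \(p_j^\#\) lies in the interval \([p_j^\#,p_{j+1}^\#)\), so the bound is attained on that interval.

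None of these steps is a real obstacle. The only point needing care is stating that \(r\le j\) follows from \(n\ge p_r^\#\) together with the strict monotonicity of primorials. The bound is also unaffected by repeated prime factors, since \(n/\phi(n)\) depends only on the radical of \(n\).
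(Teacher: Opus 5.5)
Your proposal is correct and follows the paper's proof essentially step for step: you compare the ordered prime divisors \(P_i\ge p_i\) using the monotonicity of \(x/(x-1)\), deduce \(r\le j\) from \(n\ge p_r^\#\), and check equality at \(p_j^\#\). You also spell out that extending the product from \(r\) to \(j\) factors only increases it, a step the paper leaves implicit.
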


\begin{proof}
Let \(\ell_1<\cdots<\ell_r\) be the distinct prime divisors of \(n\). Since \(\ell_i\ge p_i\) and \(x/(x-1)\) is decreasing for \(x>1\),
\[
\frac{n}{\phi(n)}
=
\prod_{i=1}^{r}\frac{\ell_i}{\ell_i-1}
\le
\prod_{i=1}^{r}\frac{p_i}{p_i-1}.
\]
If \(p_j^\#\le n<p_{j+1}^\#\), then \(r\le j\), since otherwise \(n\ge p_r^\#\ge p_{j+1}^\#\), a contradiction. The second bound follows, and equality holds at \(n=p_j^\#\).
\end{proof}

\begin{lemma}\label{lem:support-reduction}
Let \(C>2\) and \(2\le n<C\). Put \(r=\omega(n)\) and
\(d=\operatorname{rad}(n)=\prod_{p\mid n}p\). Then \(d\ge p_r^\#\) and
\(n=md\) for some \(m\in\mathbb N\) satisfying \(m<C/p_r^\#\), and every
prime divisor of \(m\) divides \(d\).

If \(r\ge2\) and \(q\) is the largest prime divisor of \(d\), then
\(q\,p_{r-1}^\#\le d<C\), and hence \(q<C/p_{r-1}^\#\). Moreover,
\[
\frac{\phi(n)}{n}
=
\frac{\phi(d)}{d}
=
\prod_{p\mid d}\left(1-\frac1p\right).
\]
Consequently, for fixed \(C\) and \(r\), the integers \(n<C\) with
\(\omega(n)=r\) reduce to finitely many prime supports and finitely many
admissible multipliers \(m=n/d\).
\end{lemma}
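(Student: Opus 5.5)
The argument is elementary and follows the order of the assertions in Lemma~\ref{lem:support-reduction}: bound the radical, control the cofactor, bound the largest support prime, and then count.

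\emph{Radical and cofactor.} Let \(\ell_1<\cdots<\ell_r\) be the distinct prime divisors of \(n\), so \(d=\operatorname{rad}(n)=\ell_1\cdots\ell_r\). Since \(\ell_i\ge p_i\) for each \(i\), as in the proof of Lemma~\ref{lem:primorial-extremality}, we get \(d\ge p_r^\#\). Because \(d\mid n\), we may write \(n=md\) with \(m\in\mathbb N\). Then \(m=n/d\le n/p_r^\#<C/p_r^\#\). Every prime divisor of \(m\) divides \(n\), and hence divides \(d\), by the definition of the radical.

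\emph{Largest support prime.} Suppose \(r\ge2\) and let \(q=\ell_r\) be the largest prime divisor of \(d\). The remaining \(r-1\) distinct primes \(\ell_1,\ldots,\ell_{r-1}\) have product at least \(p_{r-1}^\#\). Hence \(q\,p_{r-1}^\#\le d\le n<C\), which gives \(q<C/p_{r-1}^\#\).

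\emph{Totient identity.} The identity \(\phi(n)/n=\prod_{p\mid n}(1-1/p)\) depends only on the set of prime divisors of \(n\). This set coincides with the set of prime divisors of \(d\), so \(\phi(n)/n=\phi(d)/d\).

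\emph{Finiteness.} Fix \(C\) and \(r\). If \(r=1\), the support is a single prime \(p\), and \(p\le n<C\) leaves finitely many choices. If \(r\ge2\), every support prime is at most \(q<C/p_{r-1}^\#\), so there are finitely many possible supports. For each support, \(m\) ranges over the finitely many positive integers below \(C/p_r^\#\) whose prime factors lie in the support.

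No step is a real obstacle. The only point that needs care is the finiteness count when \(r=1\), where the largest-prime bound is not available and we must use \(p\le n<C\) directly instead.
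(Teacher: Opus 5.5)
Your proof is correct and follows essentially the same route as the paper: \(d\ge p_r^\#\) from \(\ell_i\ge p_i\), \(m=n/d<C/p_r^\#\), the bound \(q\,p_{r-1}^\#\le d<C\) coming from the other \(r-1\) support primes, and the fact that \(\phi(n)/n\) depends only on the prime support. Your separate handling of \(r=1\) in the finiteness count, where you use \(p\le n<C\) directly, makes explicit a small point that the paper leaves implicit.
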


\begin{proof}
Since \(d\) is the product of the \(r\) distinct prime divisors of \(n\), one
has \(d\ge p_r^\#\). Also \(d\mid n\), so \(n=md\) for some
\(m\in\mathbb N\). Every prime divisor of \(m\) already divides \(d\), and
\(m=n/d<C/d\le C/p_r^\#\).

If \(r\ge2\) and \(q\) is the largest prime divisor of \(d\), then the
remaining \(r-1\) prime divisors have product at least \(p_{r-1}^\#\). Hence
\(q\,p_{r-1}^\#\le d<C\), which gives the asserted bound on \(q\).

Finally, \(\phi(n)/n\) depends only on the prime support of \(n\), so
\[
\frac{\phi(n)}{n}
=
\prod_{p\mid n}\left(1-\frac1p\right)
=
\prod_{p\mid d}\left(1-\frac1p\right)
=
\frac{\phi(d)}{d}.
\]
The bounds on \(q\) and \(m\) make both the possible supports and the
possible multipliers finite.
\end{proof}

\subsection{Threshold Determination}\label{subsec:exact-thresholds}

\begin{theorem}\label{thm:exact-thresholds}
The exact eventual thresholds for the levels considered here are as follows:
\[
\begin{array}{c@{\qquad}r@{\qquad}r}
\hline
k & \text{last failure of }\phi(n)>k\pi(n) & N_k \\
\hline
1 & 90 & 91 \\
2 & 90\,090 & 90\,091 \\
3 & 223\,092\,870 & 223\,092\,871 \\
4 & 601\,681\,470\,390 & 601\,681\,470\,391 \\
5 & 2\,434\,002\,108\,217\,680 & 2\,434\,002\,108\,217\,681 \\
6 & 32\,589\,158\,477\,190\,044\,730 & 32\,589\,158\,477\,190\,044\,731 \\
\hline
\end{array}
\]
Moreover,
\[
M_k=N_{k+1}
\qquad(1\le k\le5).
\]
\end{theorem}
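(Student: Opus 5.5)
For each level \(k\), the plan has two parts. First exhibit the last failure explicitly. Then prove that every larger \(n\) satisfies the strict inequality, splitting \(n\) into primorial intervals \(p_j^\#\le n<p_{j+1}^\#\) as in Lemma~\ref{lem:primorial-extremality}. The listed last failures are all of the form \(p_j^\#\cdot c\) or close to a primorial: \(90=3p_3^\#\), \(90\,090=3p_6^\#\), \(223\,092\,870=p_9^\#\). For \(k\) fixed, \(\phi(n)>k\pi(n)\) is equivalent to \(n/\phi(n)<n/(k\pi(n))\).

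On the interval \([p_j^\#,p_{j+1}^\#)\), Lemma~\ref{lem:primorial-extremality} gives \(n/\phi(n)\le P_j:=\prod_{i\le j}p_i/(p_i-1)\). An explicit upper bound for \(\pi(n)\) (Rosser--Schoenfeld, \(\pi(x)<1.25506\,x/\log x\), or the sharper Dusart-type bounds) then turns the inequality into \(\log n>c\,kP_j\) for a suitable absolute constant \(c\). Whenever this holds at the left endpoint \(n=p_j^\#\), it holds on the whole interval, since the right side is constant there while \(\log n\) increases. For all large \(j\) this is automatic by Mertens' theorem with the explicit Rosser--Schoenfeld product bound (Theorem~8 quoted above): \(\log p_j^\#=\vartheta(p_j)\sim p_j\) grows much faster than \(P_j\asymp\log p_j\). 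For each \(k\le6\), this analytic tail argument disposes of all intervals beyond some explicit \(j_0(k)\). The interval containing the claimed last failure, and the few intervals just below and above it, remain.

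On these remaining bounded ranges \(n<C\), I would not scan integers one by one, which is infeasible at \(10^{19}\). Instead, Lemma~\ref{lem:support-reduction} lets me enumerate prime supports \(d\) with \(\omega(d)=r\) and \(q<C/p_{r-1}^\#\), together with multipliers \(m\) composed of primes of \(d\). For each support, \(\phi(n)/n=\phi(d)/d\) is fixed. Because \(\pi\) is increasing, the condition \(n\phi(d)/d\le k\pi(n)\) fails only below a computable crossover, so for each \(d\) it suffices to find the largest admissible \(n=md\) satisfying it. Supports whose ratio \(d/\phi(d)\) is too small to fail anywhere below \(C\) are pruned immediately. This pruning cuts the search to supports close to initial segments of primes. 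It requires certified values of \(\pi(n)\) up to about \(3.3\times10^{19}\), taken from published tables or computed by the Lagarias--Miller--Odlyzko method. I expect this certified finite computation, especially for \(k=6\), to be the main obstacle: the pruning must be tight enough and the \(\pi\)-evaluations trustworthy, and the intervals where the analytic bound is marginal are exactly those near the last failure.

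For \(M_k\), Proposition~\ref{prop:threshold-relation} gives \(M_k\le N_{k+1}\). For the reverse inequality I would check directly that the last failure \(n_0=N_{k+1}-1\) of \(\phi(n)>(k+1)\pi(n)\) also fails \(B(n)>kA(n)\). By the identity in the introduction this is \(\phi(n_0)-(k+1)\pi(n_0)+(k+1)\omega(n_0)\le0\). This is not automatic, so for each of the five values I would compute \(\phi(n_0)\), \(\pi(n_0)\) and \(\omega(n_0)\) explicitly and confirm that the \(\omega\)-correction does not flip the sign. Should it flip for some \(k\), the argument would need a separate search for the last failure of \(B(n)>kA(n)\), run with the same support reduction.
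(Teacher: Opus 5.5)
Your plan has the same architecture as the paper's proof. It uses primorial extremality (Lemma~\ref{lem:primorial-extremality}) to dispose of the low-\(\omega\) strata and the far tail. It uses Lemma~\ref{lem:support-reduction} to reduce the top stratum below an explicit cutoff \(C\) to a handful of radicals and multipliers: \(n/d\in\{1,2,3\}\), \(q\le101\) at level~4, and only \(p_{16}^\#\) plus its two variants with \(53\) replaced by \(59\) or \(61\) at level~6. Finally, it checks directly that \(N_{k+1}-1\) also fails \(B(n)>kA(n)\) and invokes Proposition~\ref{prop:threshold-relation}.

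The main difference is that the paper never computes \(\pi\) exactly near \(10^{19}\); levels 1--3 are simply sieved to \(266\,483\,380\). Instead, boundary failures are certified with Dusart's lower bound \(\pi(x)\ge x/(\log x-1)\), and surviving candidates and crossovers are closed with Axler's explicit upper bound under outward-rounded interval arithmetic. This removes the step you flag as the main obstacle. It is also what your per-support crossover actually needs: \(n/\pi(n)\) is not monotone, so monotonicity of \(\pi\) alone does not give a crossover.
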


\begin{proof}
Throughout the finite computations, we work with integer-valued defects. Since \(A(n)=\pi(n)-\omega(n)\) and \(B(n)=\phi(n)-A(n)\), the two strict inequalities are equivalent to
\[
\begin{aligned}
\phi(n)>k\pi(n)
&\iff
\phi(n)-k\pi(n)>0,\\
B(n)>kA(n)
&\iff
\phi(n)-(k+1)\bigl(\pi(n)-\omega(n)\bigr)>0.
\end{aligned}
\]
Thus the finite scans involve exact integer arithmetic; the numerical analytic comparisons used below to close finite ranges and tails are certified by outward-rounded interval arithmetic.

For the first three \(\phi/\pi\) levels, an exhaustive segmented sieve through
\(266\,483\,380\), together with the indicated analytic tails, gives
\[
\begin{array}{l@{\qquad}r@{\qquad}l}
\hline
\text{inequality} & \text{last failure in the exact scan} & \text{certified tail} \\
\hline
\phi(n)>\pi(n) & 90 & n\ge296 \\
\phi(n)>2\pi(n) & 90\,090 & n\ge104\,065 \\
\phi(n)>3\pi(n) & 223\,092\,870 & n\ge266\,483\,381 \\
\hline
\end{array}
\]
The explicit estimates used to certify these tails are those of
Rosser--Schoenfeld, Büthe, and Axler
\cite[Theorem 15, p.~72]{RosserSchoenfeld1962}
\cite[Theorem 2(f)]{Buthe2018}
\cite[Theorem 2]{Axler2024}.
Hence \(N_1=91\), \(N_2=90\,091\), and \(N_3=223\,092\,871\). The same exact scan verifies that \(90\,090=N_2-1\) is a failure of \(B(n)>A(n)\), and that \(223\,092\,870=N_3-1\) is a failure of \(B(n)>2A(n)\). Therefore \(M_1\ge N_2\) and \(M_2\ge N_3\). Proposition~\ref{prop:threshold-relation} gives the reverse inequalities, so \(M_1=N_2=90\,091\) and \(M_2=N_3=223\,092\,871\).

For level \(4\), the same explicit estimates also certify
\(\phi(n)>4\pi(n)\) for every \(n\ge656\,835\,307\,560\).
This is the first case requiring the support reduction.
Put \(L_0=601\,681\,470\,390\) and \(C=656\,835\,307\,560\).
The certified boundary computation shows that \(L_0\) is a failure of both
\(\phi(n)>4\pi(n)\) and \(B(n)>3A(n)\), while the preceding analytic
estimate excludes all failures for \(n\ge C\). Thus it remains only to
exclude failures in \(L_0<n<C\).

Since \(p_{12}^\#=7\,420\,738\,134\,810>C\), one has
\(\omega(n)\le11\) throughout this interval. For \(\omega(n)\le10\),
Lemma~\ref{lem:primorial-extremality}, together with Axler's explicit
upper bound for \(\pi(n)\) \cite[Theorem 2]{Axler2024}, eliminates the
entire stratum. Hence only \(\omega(n)=11\) remains.

For this last stratum, Lemma~\ref{lem:support-reduction}, with
\(d=\operatorname{rad}(n)\), gives \(n/d\in\{1,2,3\}\). If \(q\) is the
largest prime divisor of \(d\), the same lemma gives
\(q\,p_{10}^\#<C\), hence \(q\le101\). Thus only finitely many prime
supports and admissible multipliers remain, and for each such support
\(\phi(n)/n=\phi(d)/d\) exactly. The certified exhaustive enumeration of these possibilities, combined
with the same explicit prime-counting bound, finds no failure of
\(\phi(n)>4\pi(n)\) in the open interval \((L_0,C)\). Therefore
\(N_4=L_0+1\). Since \(L_0=N_4-1\) is also a failure of
\(B(n)>3A(n)\), Proposition~\ref{prop:threshold-relation} gives
\(M_3=N_4\). Hence,
\[
N_4=M_3=601\,681\,470\,391.
\]

We next treat level \(5\). Put
\(L_*=8p_{13}^\#=2\,434\,002\,108\,217\,680\).
The exact values are \(\omega(L_*)=13\) and
\(\phi(L_*)=353\,158\,299\,648\,000\).
Dusart's lower bound
\(x/(\log x-1)\le\pi(x)\) for \(x\ge5393\)
\cite[Theorem 6.9, (6.6)]{Dusart2010},
together with certified outward-rounded interval computations, gives
\[
\begin{aligned}
\phi(L_*)
&<
5\frac{L_*}{\log L_*-1}
\le
5\pi(L_*),\\
\phi(L_*)
&<
5\left(
\frac{L_*}{\log L_*-1}-13
\right)
\le
5\bigl(\pi(L_*)-13\bigr).
\end{aligned}
\]
Thus \(L_*\) is a failure of the inequality defining \(N_5\); since
\(\omega(L_*)=13\), it is also a failure of \(B(n)>4A(n)\).

To exclude later failures of the first inequality, put
\(C=2.6\times10^{15}\).
On \(L_*<n<C\), the strata with \(\omega(n)\le12\) are eliminated by
the certified analytic comparisons obtained from
Lemma~\ref{lem:primorial-extremality} and Axler's explicit upper bound
for \(\pi(n)\) \cite[Theorem 2]{Axler2024}. Since \(p_{14}^\#>C\),
only the stratum \(\omega(n)=13\) remains.

For this stratum, Lemma~\ref{lem:support-reduction}, with
\(d=\operatorname{rad}(n)\) and \(n=md\), gives \(1\le m\le8\).
If \(q\) denotes the largest prime divisor of \(d\), the same lemma gives
\(q\,p_{12}^\#<C\), hence \(q<351\).
Thus the terminal range reduces to finitely many prime supports and
admissible multipliers. Using the exact support value
\(\phi(n)/n=\phi(d)/d\), the certified exhaustive enumeration finds no
failure of \(\phi(n)>5\pi(n)\) in the open interval \((L_*,C)\).

The remaining tail is covered by the same primorial mechanism.
On \(C\le n<p_{14}^\#\), the \(13\)-prime primorial bound has a strictly
positive certified margin, and the corresponding margins remain positive
across the transitions through \(p_{14}^\#\), \(p_{15}^\#\), and
\(p_{16}^\#\). From \(p_{16}^\#\) onward, the Rosser--Schoenfeld totient
estimate combined with Axler's prime-counting bound gives an increasing
lower quotient whose value at the initial point already exceeds \(5\).
Consequently \(\phi(n)>5\pi(n)\) for every \(n>L_*\), so
\(N_5=L_*+1\). Since \(L_*=N_5-1\) is also a failure of
\(B(n)>4A(n)\), Proposition~\ref{prop:threshold-relation} gives
\(M_4=N_5\). Hence
\[
N_5=M_4=2\,434\,002\,108\,217\,681.
\]

Finally, consider level \(6\). Put
\(P_{16}=p_{16}^\#=32\,589\,158\,477\,190\,044\,730\).
Certified outward-rounded interval computations, together with Dusart's
lower bound \(x/(\log x-1)\le\pi(x)\) for \(x\ge5393\)
\cite[Theorem 6.9, (6.6)]{Dusart2010}, give
\[
\begin{aligned}
\phi(P_{16})
&<
\frac{6P_{16}}{\log P_{16}-1}
\le
6\pi(P_{16}),\\
\phi(P_{16})
&<
6\left(
\frac{P_{16}}{\log P_{16}-1}-16
\right)
\le
6\bigl(\pi(P_{16})-16\bigr).
\end{aligned}
\]
Thus \(P_{16}\) is a failure of the inequality defining \(N_6\), and,
since \(\omega(P_{16})=16\), it is also a failure of \(B(n)>5A(n)\).

It remains to exclude later failures of the first inequality. Put
\(C=4\times10^{19}\). Since \(p_{17}^\#>C\), one has
\(\omega(n)\le16\) for every \(n<C\). For the strata with
\(\omega(n)\le15\), the certified analytic comparison obtained from
Lemma~\ref{lem:primorial-extremality} and Axler's explicit upper bound
for \(\pi(n)\) \cite[Theorem 2]{Axler2024} shows that
\(\phi(n)>6\pi(n)\) throughout \(P_{16}<n<C\) when
\(\omega(n)\le15\). Hence only the stratum \(\omega(n)=16\) remains.

For this stratum, Lemma~\ref{lem:support-reduction}, with
\(d=\operatorname{rad}(n)\), gives \(1\le n/d<C/P_{16}<2\), so
\(n=d\) is squarefree. If \(q\) is the largest prime divisor of \(n\),
the same lemma gives \(q\,p_{15}^\#<C\), hence \(q<66\). The resulting
finite support enumeration leaves, besides the boundary \(P_{16}\),
exactly two candidates below \(C\): those obtained from \(P_{16}\) by
replacing \(53\) by \(59\) and by \(61\), respectively. Both have
strictly positive certified support-specific margins for
\(\phi(n)>6\pi(n)\). Thus no failure occurs in the open interval
\((P_{16},C)\).

For \(C\le n<p_{17}^\#\), the \(16\)-prime primorial bound has a
strictly positive certified margin above level \(6\). From
\(p_{17}^\#\) onward, the Rosser--Schoenfeld totient estimate combined
with Axler's prime-counting bound gives an increasing lower quotient
whose value at \(p_{17}^\#\) already exceeds \(6\). Consequently
\(\phi(n)>6\pi(n)\) for every \(n>P_{16}\), so \(N_6=P_{16}+1\).
Since \(P_{16}=N_6-1\) is also a failure of \(B(n)>5A(n)\), one has
\(M_5\ge N_6\); Proposition~\ref{prop:threshold-relation} gives the
reverse inequality \(M_5\le N_6\). Hence,
\[
N_6=M_5=32\,589\,158\,477\,190\,044\,731.
\]

All finite scans and support enumerations used above are performed with exact integer or rational arithmetic. The reductions in Lemmas~\ref{lem:primorial-extremality} and~\ref{lem:support-reduction} make the bounded searches exhaustive, while the analytic comparisons needed for boundary strictness and for closing the remaining tails are certified by outward-rounded interval arithmetic. Available computational source files and certificate data are collected in the accompanying reproducibility material.
\end{proof}

For every positive integer \(k\), the exact decompositions give
\begin{equation}\label{eq:threshold-defect}
B(n)-kA(n)
=
\phi(n)-(k+1)\pi(n)
+
(k+1)\omega(n).
\end{equation}
The defect identity \eqref{eq:threshold-defect} shows that the two threshold problems differ by the nonnegative correction \((k+1)\omega(n)\). Nevertheless, Theorem~\ref{thm:exact-thresholds} gives \(M_k=N_{k+1}\) for \(1\le k\le5\). This repeated equality motivates the following conjecture.

\begin{conjecture}\label{conj:threshold-equality}
For every integer \(k\ge1\), $M_k=N_{k+1}$.
\end{conjecture}

\end{document}